\newcommand{\RR}{\mathbb{R}}
\newcommand{\BB}{\mathbb{B}}
\newcommand{\EE}{\mathbb{E}}
\newcommand{\dom}{{\mathrm{dom}}\,} 
\newcommand{\ri}{{\mathrm{ri}}}
\newcommand{\prox}{{\mathbf{prox}}}
\newcommand{\cA}{{\mathcal{A}}}
\newcommand{\cY}{{\mathcal{Y}}}
\newcommand{\cP}{{\mathcal{P}}}
\newcommand{\cS}{{\mathcal{S}}}
\newcommand{\cL}{{\mathcal{L}}}
\newcommand{\cE}{{\mathcal{E}}}
\DeclareMathOperator*{\Min}{minimize\quad}
\DeclareMathOperator*{\Max}{maximize\quad}
\newcommand{\st}{\mbox{subject to}}
\newtheorem{assumption}{Assumption}[section]
\newtheorem{theorem}{Theorem}[section]
\newtheorem{lemma}{Lemma}[section]
\newtheorem{definition}{Definition}[section]
\newtheorem{proposition}{Proposition}[section]
\begin{document}

\title{Linear convergence of random dual coordinate incremental aggregated gradient methods}

\author{Hui Zhang\thanks{
Department of Mathematics, National University of Defense Technology,
Changsha, Hunan 410073, China.  Email: \texttt{h.zhang1984@163.com}}
\and Yu-Hong Dai\thanks{100190 $\&$ School of Mathematical Sciences,
Chinese Academy of Sciences, Beijing 100049, China, Email: \texttt{dyh@lsec.cc.ac.cn}
}
\and Lei Guo\thanks{Corresponding author. School of Business, East China University of Science and Technology, Shanghai 200237, China. Email: \texttt{lguo@ecust.edu.cn}
}
}

\date{\today}

\maketitle

\begin{abstract}
In this paper, we consider the dual formulation of minimizing
$
\sum_{i\in I}f_i(x_i)+\sum_{j\in J} g_j(\mathcal{A}_jx)
$
with the index sets $I$ and $J$ being large. To address the difficulties from the high dimension of the variable $x$ (i.e., $I$ is large) and the large number of component functions $g_j$ (i.e., $J$ is large), we propose a hybrid method called the random dual coordinate incremental aggregated gradient method by blending the random dual block coordinate descent method and the proximal incremental aggregated gradient method. To the best of our knowledge, no research is done to address the two difficulties simultaneously in this way. Based on a newly established descent-type lemma, we show that linear convergence of the classical proximal gradient method under error bound conditions could be kept even one uses delayed gradient information and randomly updates coordinate blocks. Three application examples are presented to demonstrate the prospect of the proposed method.
\end{abstract}

\textbf{Keywords.} composition convex optimization, random dual block coordinate descent, proximal incremental aggregated gradient method, error bound, linear convergence

\textbf{AMS subject classifications.} 90C25,  65K05.


\section{Introduction}
The following structured composition convex optimization has been well studied in the literature
\begin{equation}\label{p}
\Min_{x\in \cE_1} F(x):= f(x)+g(\cA x), \\[4pt]
\end{equation}
where $f,g$ are proper closed convex functions, $\cE_1$ a Euclidean space, $\cA$  a given linear mapping. This class of problems frequently appears in many fields such as mathematical optimization, signal/imaging processing, machine learning and big data. Many efficient numerical algorithms for solving it are proposed in the literature. Among first-order methods, the proximal gradient (PG) method may be the most well-known. A standard assumption, required by the PG method, is that $g$ is gradient-Lipschitz-continuous and the proximal operator of $f$ can be easily computed. However, in many cases this assumption fails to hold for the primal problem \eqref{p} but fortunately it holds for its Fenchel-Rockafellar dual problem
\begin{equation}\label{d}
\Min_{y\in \cE_2} D(y):= f^*(-\cA^*y)+g^*(y),
\end{equation}
where $\cE_2$ is the dual space to $\cE_1$, and $f^*$ and $g^*$ are conjugate functions of $f$ and $g$ respectively. That is to say, $f^*$ may be gradient-Lipschitz-continuous and the proximal operator of $g^*$ can be easily computed although $g$ is not gradient-Lipschitz-continuous or the proximal operator of $f$ cannot be easily computed. The total variation denoising problem, formulated as $\Min \frac{1}{2}\|x-v\|^2+ \lambda \|Dx\|_1$ with given signal $v$, regularization parameter $\lambda>0$, and matrix $D$, is such an example. Therefore, the PG method could be directly applied to the dual problem \eqref{d}, and then the dual-based PG methods follows; see e.g. Beck's recent book \cite{beck2017first} for sublinearly convergent dual PG methods and paper \cite{Necoara2019Linear} for linearly convergent random dual block coordinate descent methods. However, these existing dual-based methods only exploit the separability of $g$ and then may be not suitable for solving problems where $f$ has a huge number of component functions. This motivates us to exploit the separability of $f$ and $g$ to design efficient methods.

In this paper, we consider problem \eqref{p} where both $f$ and $g$ have separable structure as follows
\begin{equation*}\label{pf}
f(x):=\sum_{i\in I} f_i(x_i),\ g(\cA x):=\sum_{j\in J} g_j(\cA_jx),
\end{equation*}
where $f_i,g_j$ are all proper closed convex functions, $\cA_j$ given linear mappings, and $I,J$ index sets. Specifically, this paper focuses on designing efficient methods for solving
\begin{equation}\label{p3}
\Min_{x\in \cE_1} F(x) = \sum_{i\in I}f_i(x_i)+\sum_{j\in J} g_j(\cA_jx),
\end{equation}
with $|I|$ and $|J|$ being large,  whose Fenchel-Rockafellar dual problem can be written as
\begin{equation}\label{d3}
\Min_{y\in \cE_2} D(y):=  \sum_{i\in I}f_i^*(-\sum_{j\in J}\cA_{ji}^*y_j) +  \sum_{j\in J}g_j^*(y_j),
\end{equation}
where the linear mappings $\cA_{ji}$ will be defined in Section 2. The cardinality $|I|$ is the dimension of the primal variable $x$ or the number of component functions $f_i^*$ in the dual problem, and the cardinality $|J|$ is the dimension of the dual variable $y$ or the number of component functions $g_j$ in the primal problem \eqref{p3}. When the PG method is applied to the dual problem, large $|I|$  implies a huge gradient computation complexity and large $|J|$ implies a large proximal point computation complexity at each step.  Therefore, the case of $|J|$ being large leads to the development of block-type PG variants such as the random dual block coordinate descent (DBCD) method \cite{Peter2014,Necoara2019Linear}; while the case of $|I|$ being large leads to the development of incremental-type methods such as the proximal incremental aggregated gradient (PIAG) method \cite{Gurbuzbalaban2015on,Vanli2016}.
To the best of our knowledge, in the literature there is no research that deals with the case where both $|I|$ and $|J|$ are large. As a first try, we propose a hybrid method, called the random dual coordinate incremental aggregated gradient (abbreviated by RDCIAG) method, by blending the random DBCD method and the PIAG method. At the algorithmic level, our proposed method could be viewed as a further research for the random DBCD method and the PIAG method.

In analyzing the linear convergence rate of iterate methods, error bound conditions have been shown to be extremely useful  \cite{luo1993error,Bolte2015From}. Global error bound conditions may be too stringent in practice, which will substantially restrict the applicability, and local error bound conditions are not sufficient to ensure linear convergence for non-monotone iterative methods.
In this paper, we will use the bounded error bound condition to analyze the iteration complexity of the RDCIAG method, which is non-monotone. The bounded error bound condition is actually the bounded metric subregularity of the subdifferential of the dual objective function in problem \eqref{d3}. Many sufficient conditions for ensuring bounded error bound condition to hold are given in \cite{Drusvyatskiy2018Error} and \cite{Ye2018} (see Section \ref{sec:error}). Based on the bounded error bound condition, we show that the RDCIAG method converges linearly. The proof depends on two pillars: One is the tail-vanishing lemma introduced in \cite{Aytekin2016}, and the other is a newly developed descent-type lemma, which delicately combines the random block coordinate descent and the PIAG descent.

The remainder of the paper is organized as follows. In Section 2, we present the basic notation and some elementary preliminaries, and  existing sufficient conditions for ensuring bounded error bound condition to hold. In Section 3, we propose the RDCIAG method by blending the random DBCD method and the PIAG method. In Section 4, we study the linear convergence of the proposed method. In Section 5, we present three application examples to demonstrate the prospect of the proposed method. Finally, section 6 gives some concluding remarks.

\section{Preliminaries and preliminary results}

In this paper, we restrict our analysis in finite dimensional Euclidean spaces. Let $\cE$ be a Euclidean space and $\|\cdot\|$ the associated Euclidean norm. For a closed subset $Q\subseteq \cE$ and a point $x\in \cE$, we define by
$d(x,Q):=\inf_{y\in Q}\|x-y\|$ the distance function from $x$ to $Q$ and by $\cP_Q(x):=\{y\in Q: \|y-x\| = d(x,Q)\}$ the set of projection from $x$ to $Q$.
The closed ball around $x\in\cE$ with radius $r>0$ is denoted by $\BB_\cE(x,r):=\{y\in\cE:\|x-y\|\leq r\}$. If the central point is zero and the around space $\cE$ is known, we abbreviate the closed ball with radius $r$ as $\BB_r$. We let ``int" and ``ri" denote the interior and relative interior of a given set respectively.

Given $m$ Euclidean spaces $\{\cE_i: i=1,\cdots, m \}$  with inner products $\langle \cdot, \cdot\rangle_{\cE_i}$, their Cartesian product, defined by
$$ \cE:=\bigoplus_{i=1}^m\cE_i=\{(x_1,x_2,\cdots, x_m): x_i\in \cE_i, i=1,\cdots, m\},$$
is a Euclidean space equipped with the component-wise addition and the scalar-vector multiplication.
The inner product in $\bigoplus_{i=1}^m\cE_i$ is defined as
$$\big\langle (x_i)_{i=1}^m, (z_i)_{i=1}^m \big\rangle_\cE:= \sum_{i=1}^m \langle x_i, z_i\rangle_{\cE_i}.$$
This paper focuses on two Euclidean spaces $\cE_1$ and $\cE_2$ which are defined as Cartesian products of a group of Euclidean spaces:
\[
\cE_1 := \bigoplus_{i\in I} \cE_{1,i},\quad \cE_2 := \bigoplus_{j\in J }\cE_{2,j},
\]
where $I$ and $J$ are two finite index sets.  When no confusion arises, we will omit the subscript.
A linear transform $\cA$ is defined from $\cE_1$ to $\cE_2$ as follows:
\[
\cA x= (\cA_jx)_{j\in J}=\Big(\sum_{i\in I}\cA_{ji}x_i\Big)_{j\in J},
 \]
 where $\cA_{ji}$ are linear transforms from $ \cE_{1,i}\rightarrow \cE_{2,j}$. The associated adjoint transform $\cA^*:\cE_2\to \cE_1$ is defined by
$$\cA^* y= \sum_{j\in J}\cA_{j}^*y_j=\Big(\sum_{j\in J}\cA_{ji}^*y_j\Big)_{i\in I}.$$
The norm of the linear transform $\cA$ is defined by
$$\|\cA\|:=\max\{\|\cA x\|_{\cE_2}: \|x\|_{\cE_1}\leq 1\}.$$

Let $\Gamma_0(\cE)$ denote the class of proper and lower semicontinuous convex functions from $\cE$ to $(-\infty, +\infty]$.
Let $\phi_i\in \Gamma_0 (\cE_{1,i}), i\in I$ and $\phi\in \Gamma_0 (\cE_1)$. We say that $\phi$ is separable if it has the form
$\phi(x)=\sum_{i\in I} \phi_i(x_i)$.

Let $\phi:\cE\rightarrow (-\infty, +\infty]$ be a proper convex function. The effective domain of $\phi$ is defined by $\dom \phi :=\{x\in \cE: \phi(x)< +\infty\}$.
The proximal mapping of $\phi$ is defined by
$$\prox_{\lambda \phi}(x):=\arg\min_{y\in\cE}\{\phi(y)+\frac{1}{2\lambda}\|y-x\|^2\}.$$
The conjugate (also called Fenchel conjugate, or Legendre tansform, or Legendre-Fenchel tansform) of $\phi$ is
$$\phi^*(y)=\sup_{x\in \cE}\{\langle x,y\rangle_\cE-\phi(x)\}.$$
The subdifferential of $\phi$  at $x$ is defined by
$$\partial \phi (x):= \{ y \in \cE: \phi(u)\geq \phi(x)+ \langle y, u-x\rangle_\cE,\quad \forall u\in \cE \}.$$
We say that $\phi$ is subdifferentiable at $x\in \cE$ if $\partial \phi(x)\neq \emptyset$. The elements of $\partial \phi(x)$ are called the subgradients of $\phi$ at $x$.

A closed proper convex function $\phi$ is called essentially smooth if $\partial \phi$ is a single-valued mapping. In this case, $\partial \phi(x)=\nabla \phi(x)$ when $x\in {\rm int}\, \dom \phi$ and $\partial \phi(x)=\emptyset$ otherwise \cite[Theorem 26.1]{rockafellar1970convex}.
$\phi$  is called essentially strictly convex if $\phi$ is strictly convex on every convex subset of $\{x: \partial \phi(x)\not=\emptyset\}$. A closed proper convex function is essentially strictly convex if and only if its conjugate is essentially smooth \cite[Theorem 26.3]{rockafellar1970convex}.

We say that $\phi:\cE\rightarrow (-\infty, +\infty)$ is gradient-Lipschitz-continuous with  modulus $L>0$ if
\begin{equation*}
\|\nabla \phi(x)-\nabla \phi(y)\|\leq L\|x-y\|, \quad\forall x, y\in\cE. \label{Lip}
\end{equation*}
We say that $\phi:\cE\rightarrow (-\infty, +\infty]$ is strongly convex with modulus $\mu>0$ if for any $\alpha\in [0,1]$,
\begin{equation*}
\phi(\alpha x+(1-\alpha)y)\leq \alpha \phi(x)+(1-\alpha)\phi(y)-\frac{1}{2}\mu\alpha(1-\alpha)\|x-y\|^2, \quad\forall x, y\in\dom \phi, \label{SC1}
\end{equation*}
or if (when it is differentiable)
\begin{equation*}
\langle \nabla \phi(x)-\nabla \phi(y), x-y\rangle \geq \mu \|x-y\|^2,\quad\forall x,y\in \dom \phi.\label{SC}
\end{equation*}

A multi-function $\Psi: \mathcal{E}_1\rightrightarrows\mathcal{E}_2$ is a mapping assigning each point in $\cE_1$ to a subset of $\cE_2$. The graph of $\Psi$ is defined by
$$\verb"gph"(\Psi):=\{(u,v)\in\cE_1\times\cE_2: v\in \Psi(u)\}.$$
The inverse map $\Psi^{-1}:\mathcal{E}_2\rightrightarrows\mathcal{E}_1$  is defined by
$$ \Psi^{-1}(v):=\{u\in \cE_1: v\in \Psi(u)\}.$$

The following condition plays an important role in deducing error bound conditions for structured convex optimization problems.

\begin{definition}[\cite{Bauschke1993On}, Bounded linear regularity]
We say that the pair sets $\{A, B\}$ have the bounded linear regularity (BLR) if for every bounded set $C$, there exists a constant $\kappa>0$ such that
$$d(x, A\bigcap B)\leq \kappa (d(x, A)+d(x,B)), \quad \forall x\in C.$$
\end{definition}

We now introduce two weakened conditions for the strong convexity.

\begin{definition}[\cite{Zheng2014Metric}, Bounded metric subregularity]\label{def-bms}
We say that a multi-function $\Psi: \mathcal{E}_1\rightrightarrows\mathcal{E}_2$ is bounded metrically subregular (BMS) at $(\bar{u},\bar{v})\in \verb"gph"(\Psi)$ if for any compact set $U$ with $\bar{u}\in U$, there exists $\kappa>0$ such that
\begin{equation*}\label{BMS}
d(u, \Psi^{-1}(\bar{v}))\leq \kappa d(\bar{v}, \Psi(u)),~~\forall u\in U.
\end{equation*}
\end{definition}

It should be noted that the requirement $\bar{u}\in U$ for $\bar{u}$ in Definition \ref{def-bms} is not necessary by noting the arbitrariness of compact set $U$. Then we can simply say that $\Psi$ is BMS at $\bar{v}$. As noted in \cite{Ye2018}, a polyhedral multi-function must be BMS at every point in the graph of the multi-function. Many functions are polyhedral such as the polyhedral convex function and the convex piecewise linear quadratic function. The norm function $\|\cdot\|$ also satisfies the BMS property; see \cite{Ye2018} for more details and examples.

It is well-known that a convex differentiable function $\psi$ is strongly convex if and only if its conjugate is gradient-Lipschitz-continuous \cite{Hiriart2004}. The following result shows that $\partial\psi$ is BMS at $\bar{u}$ when its inverse (or the subdifferential of the conjugate of $\psi$) is upper Lipschitz continuous at $\bar{u}$. This indicates that the BMS property is a weak version for the strong convexity.

\begin{proposition}\label{pro-bms}
Let $\psi \in \Gamma_0(\cE)$. The multi-function $\partial \psi$ is BMS at $\bar{x}$ if $\partial \psi^{-1}$ (or $\partial \psi^*$) is upper Lipschitz continuous at ${\bar x}$ in the sense that there exist $\delta>0$ and $\kappa>0$ such that
\begin{equation}\label{prop1}
\partial \psi^{-1}(x) \subseteq \partial \psi^{-1}(\bar{x}) + \kappa\|x-\bar{x}\| \BB_1,\quad \forall x\in \BB(\bar{x},\delta).
\end{equation}
\end{proposition}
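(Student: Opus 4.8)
The plan is to spell out what the conclusion asks for and then reduce it to the local inclusion \eqref{prop1} by a short two-regime argument. By Definition \ref{def-bms} together with the remark following it, ``$\partial\psi$ is BMS at $\bar x$'' means that $\bar x$ lies in the range of $\partial\psi$ --- so fix once and for all a point $\bar u$ with $\bar x\in\partial\psi(\bar u)$, i.e.\ $\bar u\in\partial\psi^{-1}(\bar x)$ (such $\bar u$ exists, that being part of the very meaning of BMS at $\bar x$) --- and that for every compact set $U\subseteq\cE$ there is $\kappa'>0$ with
\[
d\big(u,\partial\psi^{-1}(\bar x)\big)\ \le\ \kappa'\, d\big(\bar x,\partial\psi(u)\big),\qquad \forall u\in U .
\]
So I would fix an arbitrary compact $U$ (by the remark we need not assume $\bar u\in U$) and estimate the left-hand side for an arbitrary $u\in U$, splitting according to whether $d(\bar x,\partial\psi(u))$ is at most $\delta$ or larger.

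For the estimate itself, first note that if $\partial\psi(u)=\emptyset$ then $d(\bar x,\partial\psi(u))=+\infty$ and the inequality is trivial, so assume $\partial\psi(u)\neq\emptyset$; since $\psi\in\Gamma_0(\cE)$ the set $\partial\psi(u)$ is closed and convex, so in the finite-dimensional space $\cE$ the projection of $\bar x$ onto it exists, and I would pick such a projection $v\in\partial\psi(u)$, so that $\|v-\bar x\|=t:=d(\bar x,\partial\psi(u))$ and $u\in\partial\psi^{-1}(v)$. In the near regime $t\le\delta$ we have $v\in\BB(\bar x,\delta)$, so applying \eqref{prop1} at $x=v$ yields $u\in\partial\psi^{-1}(v)\subseteq\partial\psi^{-1}(\bar x)+\kappa\|v-\bar x\|\,\BB_1$, which is exactly $d(u,\partial\psi^{-1}(\bar x))\le\kappa t=\kappa\,d(\bar x,\partial\psi(u))$. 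In the far regime $t>\delta$ I would invoke compactness: with $M:=\sup_{u'\in U}\|u'-\bar u\|<\infty$ one gets $d(u,\partial\psi^{-1}(\bar x))\le\|u-\bar u\|\le M<(M/\delta)\,t=(M/\delta)\,d(\bar x,\partial\psi(u))$. Taking $\kappa':=\max\{\kappa,M/\delta\}$ then works uniformly over $U$, which is the assertion.

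I do not anticipate a real obstacle: the content is the passage from a local small-ball inclusion to a uniform estimate on an arbitrary compact set, and the split above is the natural device for that. The only points worth a line of care are (i) the identity $\partial\psi^{-1}=\partial\psi^*$ for $\psi\in\Gamma_0(\cE)$ (standard, \cite[Theorem 23.5]{rockafellar1970convex}), which is why hypothesis \eqref{prop1} is precisely upper Lipschitz continuity of $\partial\psi^*$ at $\bar x$; and (ii) the far regime, where compactness of $U$ is exactly what supplies the finite bound $M$ in the absence of any information from \eqref{prop1}. Everything else --- the existence of the projection $v$, unwinding the Minkowski sum in \eqref{prop1}, and the final maximum --- is routine.
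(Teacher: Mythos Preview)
Your proof is correct and follows essentially the same line as the paper's: both first unpack the upper Lipschitz inclusion \eqref{prop1} to obtain the local estimate $d(u,\partial\psi^{-1}(\bar x))\le\kappa\,d(\bar x,\partial\psi(u))$ whenever $d(\bar x,\partial\psi(u))\le\delta$, and then extend this to an arbitrary compact (or bounded) set. The only difference is in the second step: the paper delegates it to an external result (\cite[Proposition~1]{Ye2018}), whereas you carry it out directly via the two-regime split, using a fixed $\bar u\in\partial\psi^{-1}(\bar x)$ and the diameter bound $M=\sup_{u'\in U}\|u'-\bar u\|$ in the far regime. Your version is thus self-contained, which is a modest advantage. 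One small wording point: the existence of $\bar u$ is not ``part of the very meaning of BMS at $\bar x$'' (that is the conclusion, not a hypothesis) but rather an implicit standing assumption --- the statement is vacuous if $\partial\psi^{-1}(\bar x)=\emptyset$, and the upper Lipschitz hypothesis is only nontrivial when this set is nonempty.
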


For a function $\phi \in \Gamma_0(\cE)$, the BMS of $\partial \phi$ is equivalent to the firm convexity of $\phi$ as follows \cite{Drusvyatskiy2018Error}.

\begin{definition}[\cite{Drusvyatskiy2018Error}, Firm convexity]
A closed convex function $\phi$ is firmly convex relative to a vector $v$ if the tilted function $\phi_v(x) := \phi(x) - \langle v, x\rangle$ satisfies
the quadratic growth condition: for any compact set $V$ there is a constant $\sigma$
satisfying
\[
\phi_v(x) \ge {\rm inf} \phi_v + \frac{\sigma}{2} d^2(x,(\partial \phi_v)^{-1}(0))\quad \forall x\in V.
\]
\end{definition}

To ensure the existence of optimal solutions of dual problems and zero duality gap, we make the following standard assumptions throughout this paper.
\begin{assumption}\label{ass}
Let $f \in \Gamma_0(\cE_1), g\in \Gamma_0(\cE_2)$.
We assume that an optimal solution ${\bar x}$ of problem \eqref{p} exists. Moreover, assume one of the following conditions holds:
\begin{itemize}
  \item[(i)]The following non-degenerate condition holds, i.e.,
\begin{equation}\label{cond1}
0\in \ri(\dom g-\cA \dom f).
\end{equation}
  \item[(ii)]If $g$ is a polyhedral function, $\dom g \cap \cA \ri \dom f\not=\emptyset$ holds true.
  \item[(iii)] If both $f$ and $g$ are polyhedral functions, $\dom g \cap \cA \dom f\not=\emptyset$ holds true.
\end{itemize}
\end{assumption}

A sufficient and necessary  condition for ensuring \eqref{cond1} to hold is $\ri \dom g \cap \cA \ri \dom f\not=\emptyset$ \cite[Proposition 15.24]{Bauschke2017}.
When  $f$ and $g$ are separable, it is not hard to verify that
condition \eqref{cond1} can be equivalently written as
\begin{equation*}\label{cond1.3}
 (\ri\dom g_j)\bigcap \left(\sum_{i\in I}\cA_{ji} (\ri\dom f_i)\right)\neq \emptyset,\quad j\in J.
\end{equation*}

The following well-known result is  fundamental to study the primal-dual gap and the relationship between primal and dual solutions.
Let $P^*$ and $D^*$ denote the primal and dual optimal function value respectively, i.e.,
\[P^* = \inf_{x\in \cE_1} f(x)+g(\cA x),\ D^*=\inf_{y\in \cE_2} f^*(-\cA^*y)+g^*(y).
\]

\begin{lemma}[Fenchel-Rockafellar Duality, Theorem 15.23, Fact 15.25, and Theorem 19.1 in \cite{Bauschke2017} ]\label{FRthm}
Let $f \in \Gamma_0(\cE_1), g\in \Gamma_0(\cE_2)$, and $\cA$ be a linear transform from $\cE_1$ to $\cE_2$.
\begin{itemize}
  \item[(i)]
If Assumption \ref{ass} holds, then the duality gap is zero, i.e., $ P = - D^*$, and the dual problem possesses an optimal solution.

\item[(ii)] $ P^* =f(\bar{x})+g(\cA \bar{x}),\ D^*=f^*(-\cA^*\bar{y})+g^*(\bar{y}),$ and $P^*=-D^*$,
if and only if the KKT conditions hold
\begin{equation}\label{KTcond}
-\cA^*\bar{y}\in \partial f(\bar{x}),\ \bar{y}\in \partial g(\cA \bar{x}).
\end{equation}
\end{itemize}
\end{lemma}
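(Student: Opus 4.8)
The plan is to prove part (ii) first, since it is essentially the Fenchel--Young inequality together with its equality case, and then to deduce part (i) from part (ii) combined with a subdifferential sum/chain rule valid under Assumption \ref{ass}. Throughout I will use that the assumed primal optimal $\bar x$, together with $f,g\in\Gamma_0$ being proper, makes $P^*=f(\bar x)+g(\cA\bar x)$ finite, so the value manipulations below are legitimate. I begin by recording the Fenchel--Young inequalities applied to the pairs $(\bar x,-\cA^*\bar y)$ and $(\cA\bar x,\bar y)$,
$$f(\bar x)+f^*(-\cA^*\bar y)\geq \langle \bar x,-\cA^*\bar y\rangle=-\langle \cA\bar x,\bar y\rangle, \qquad g(\cA\bar x)+g^*(\bar y)\geq\langle \cA\bar x,\bar y\rangle.$$
Summing these and optimizing over $x$ and $y$ yields weak duality $P^*\geq -D^*$, which holds with no qualification.

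For part (ii), I would evaluate the two inequalities at the candidate pair $(\bar x,\bar y)$ and add them, so that the sum of the two nonnegative Fenchel--Young gaps equals $\big(f(\bar x)+g(\cA\bar x)\big)+\big(f^*(-\cA^*\bar y)+g^*(\bar y)\big)$. The vanishing of the first gap is exactly $-\cA^*\bar y\in\partial f(\bar x)$ and of the second exactly $\bar y\in\partial g(\cA\bar x)$, so both gaps vanish if and only if the KKT system \eqref{KTcond} holds. When \eqref{KTcond} holds the gaps sum to zero, giving $f(\bar x)+g(\cA\bar x)=-\big(f^*(-\cA^*\bar y)+g^*(\bar y)\big)$; feeding this into the weak-duality sandwich $P^*\le f(\bar x)+g(\cA\bar x)=-\big(f^*(-\cA^*\bar y)+g^*(\bar y)\big)\le -D^*\le P^*$ collapses every inequality to equality, which delivers the three optimal-value identities of (ii). Conversely, if those identities hold, then the gap sum equals $P^*+D^*=0$, forcing each nonnegative gap to vanish and hence \eqref{KTcond}. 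This proves the equivalence in (ii).

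For part (i), I would invoke Fermat's rule $0\in\partial F(\bar x)$ for $F=f+g\circ\cA$ at the assumed primal optimum, and then apply the subdifferential calculus rule
$$\partial F(\bar x)=\partial f(\bar x)+\cA^*\,\partial g(\cA\bar x).$$
Granting this rule, $0\in\partial F(\bar x)$ produces some $\bar y\in\partial g(\cA\bar x)$ with $-\cA^*\bar y\in\partial f(\bar x)$, i.e. the KKT system \eqref{KTcond}; part (ii) then returns $P^*=f(\bar x)+g(\cA\bar x)=-\big(f^*(-\cA^*\bar y)+g^*(\bar y)\big)=-D^*$, so the duality gap is zero and $\bar y$ is dual optimal, establishing (i).

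The main obstacle is precisely the chain/sum rule above, and in particular its reverse inclusion (the inclusion $\partial f(\bar x)+\cA^*\partial g(\cA\bar x)\subseteq\partial F(\bar x)$ is immediate from the subgradient definition). This is exactly where the three cases of Assumption \ref{ass} enter: case (i) is the classical relative-interior (Moreau--Rockafellar) qualification, equivalent to the condition $0\in\ri(\dom g-\cA\dom f)$ recorded after the assumption, and is handled by separating a point from the relative interior of $\dom g-\cA\dom f$ and passing to conjugates; cases (ii) and (iii) are the polyhedral refinements in which the relative-interior requirement on the polyhedral summand may be dropped, since polyhedral convex functions have polyhedral epigraphs and admit exact marginal-value representations that validate the calculus under the weaker domain-intersection hypotheses. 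Because the statement is attributed to \cite{Bauschke2017} (Theorems 15.23, 19.1 and Fact 15.25), in the interest of brevity I would either cite those results directly for the calculus rule or write out only the relative-interior case in full and indicate that the polyhedral cases follow from the standard polyhedral subdifferential calculus.
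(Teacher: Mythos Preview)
Your proof is correct; the paper itself does not prove this lemma but cites it from \cite{Bauschke2017}. That said, the paragraph immediately following the lemma in the paper derives part (i) by exactly your route---Fermat's rule at $\bar x$, then the subdifferential sum rule $\partial(f+g\circ\cA)(\bar x)=\partial f(\bar x)+\cA^*\partial g(\cA\bar x)$ under Assumption~\ref{ass} (cited there as \cite[Theorem~16.47]{Bauschke2017}) to produce a $\bar y$ satisfying \eqref{KTcond}, then part (ii) to close the gap---so your approach coincides with the paper's own reasoning.
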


Under Assumption \ref{ass}, since ${\bar x}$ is an optimal solution of problem \eqref{p}, by Fermat's rule, we have $ 0\in \partial (f + g \circ \cA) (\bar x)$. Then by Assumption \ref{ass} and \cite[Theorem 16.47]{Bauschke2017}, it follows that $0\in \partial f(\bar x) + \cA^* \partial g (\cA \bar x)$.
Let $\bar y \in \partial g (\cA \bar x)$ such that $-\cA^*\bar{y}\in \partial f(\bar{x})$. The KKT conditions \eqref{KTcond} follows immediately. Thus by Lemma \ref{FRthm}(ii), the duality gap is zero and $\bar y$ is the optimal solution of the dual problem \eqref{d}. We denote $\cY$ as the optimal solution set of problem \eqref{d}. By Lemma \ref{FRthm}(ii), we obtain
\begin{equation}\label{dualset}
\cY=\{\bar{y}\in \cE_2: \bar{y}\in \partial g(\cA\bar{x}), \cA^*\bar{y}\in -\partial f(\bar{x})\}=\partial g(\cA\bar{x})\bigcap (\cA^*)^{-1}(-\partial f(\bar{x})).
\end{equation}

\subsection{Bounded error bound conditions}\label{sec:error}

Error bound condition for the optimal solution set has been shown to be extremely useful in analyzing the linear convergence of iterate methods. Deducing global or local error bounds for mathematical programming as in \eqref{d} has a long history and been extensively investigated; see e.g. \cite{Zhou2015A,Drusvyatskiy2018Error,Ye2018} for general settings and \cite{Lai2012Augmented,Frank2015linear,Bolte2015From,Beck2015Linearly
,Necoara2019Linear,tseng2009coordinate,tseng2010approximation} for some special cases.

This subsection focuses on the {\it bounded} error bound conditions, which lie between global and local error bound conditions, for the optimal solution set $\cY=\{y\in\cE_2: 0\in\partial D(y)\}$ of the dual problem \eqref{d}.
That is, for any compact set $V\subseteq \cE_2$, there exists a constant $\kappa>0$ such that
\begin{equation}\label{err}
d(y,\cY) \le \kappa d(0,\partial D(y)),\quad \forall y\in V.
\end{equation}
It is  easy to see that $\cY = \partial D^{-1}(0)$. Let $0\in \partial D(\bar{y})$ and $\bar{y}\in V$. Then \eqref{err} can be written as
\[
d(y,\partial D^{-1}(0)) \le \kappa d(0,\partial D(y)),\quad \forall y\in V.
\]
This is exactly the BMS for the multi-function $\partial D$ at $(\bar{y},0)$.

Since the global error bound result is stronger than the bounded error bound result, we first collect the known sufficient results for ensuring the global error bound to hold in the following. The first condition (a) follows from \cite[Lemma 1]{gong2014linear} (see also Lemma 2.5 in \cite{Beck2015Linearly}) and the second condition (b) follows from the strong convexity of $f^*$ immediately.
\begin{itemize}
\item[(a)] Assume that the function $g^*$ is the indictor function of a polyhedral set $W$, and $f^*$ is a strongly convex differentiable function with $\nabla f^*$ Lipschitz continuous on $W$.

\item[(b)] Assume that $\cA$ is an identity matrix, and the function $f^*$ is a strongly convex differentiable function with $\nabla f^*$ Lipschitz continuous on $\dom(g^*)$.
\end{itemize}

In a recent paper \cite{Ye2018}, for ensuing the linear convergence of the randomized block coordinate proximal gradient method, Ye et al. investigated the BMS property of $\partial D$ (using our notation of this paper) and gave some sufficient conditions for the BMS to hold by making use of the bounded metric subregular intersection theorem (see \cite[Proposition 9]{Ye2018}).
The following result collects the sufficient conditions for the BMS of $\partial D$ to hold.

\begin{proposition}\label{pro-su}\cite[Theorem 2, Theorem 3]{Ye2018}
Assume that $f^*$ is strongly convex on any convex compact subset of $\dom f^*$, that $f^*$ is continuously differentiable on $\dom f^*$ which is assumed to be open and
$\nabla f^*$ is Lipschitz continuous on any compact subset of $\dom f^*$. Then $\partial D$ is BMS at $(\bar{y},0)$ if one of the following conditions holds
\begin{itemize}
\item[(i)] $\partial g^*$ is a polyhedral multi-function,
\item[(ii)] $\partial g^*$ is BMS at $(\bar{y},\cA \nabla f^* (-\cA^* \bar{y}))$ and the set $\{y: 0\in -\cA \nabla f^* (-\cA^* \bar{y})+\partial g^*(y)\}$ is a convex polyhedral set.
\end{itemize}
\end{proposition}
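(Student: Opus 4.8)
The plan is to verify the equivalent statement that $\partial D$ is bounded metrically subregular at $(\bar y,0)$: for every compact $V\subseteq\cE_2$ there is $\kappa>0$ with $d(y,\cY)\le\kappa\, d(0,\partial D(y))$ for all $y\in V$ (trivial when $\partial D(y)=\emptyset$). Write $h(y):=f^*(-\cA^* y)$. Since $f^*$ is continuously differentiable on the open set $\dom f^*$, the function $h$ is continuously differentiable on the open set $\{y:-\cA^* y\in\dom f^*\}$ with $\nabla h(y)=-\cA\nabla f^*(-\cA^* y)$, and the Moreau--Rockafellar sum rule gives $\partial D(y)=\nabla h(y)+\partial g^*(y)$ there. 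Put $\bar c:=\cA\nabla f^*(-\cA^*\bar y)$, so that $0\in\partial D(\bar y)$ yields $\bar c\in\partial g^*(\bar y)$ and $\nabla h(\bar y)=-\bar c$.

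\textbf{Structure of the solution set.} First I would prove $\cY=S_1\cap S_2$ with $S_1:=\{y:\cA^* y=\cA^*\bar y\}$ an affine subspace and $S_2:=(\partial g^*)^{-1}(\bar c)=\{y:\bar c\in\partial g^*(y)\}$. The inclusion ``$\supseteq$'' is immediate, since $\cA^* y=\cA^*\bar y$ forces $\nabla h(y)=-\bar c$, so $0=\nabla h(y)+\bar c\in\partial D(y)$ whenever $\bar c\in\partial g^*(y)$. For ``$\subseteq$'' the key point is that $\cA^*$ is constant on $\cY$: given $y_1,y_2\in\cY$, the segment $[y_1,y_2]$ lies in $\cY$ and $D$ is constant on it, hence $t\mapsto f^*(-\cA^*(y_1+t(y_2-y_1)))$ is affine on $[0,1]$; since $f^*$ is strongly convex, hence strictly convex, on the compact segment $[-\cA^* y_1,-\cA^* y_2]\subseteq\dom f^*$, this forces $\cA^* y_1=\cA^* y_2$. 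Then any $y\in\cY$ satisfies $\nabla h(y)=-\bar c$ and $-\nabla h(y)\in\partial g^*(y)$, i.e. $y\in S_1\cap S_2$.

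\textbf{Combining two error bounds.} Next I would apply the bounded metric subregular intersection theorem \cite[Proposition 9]{Ye2018} to the multi-functions $y\mapsto\cA^* y-\cA^*\bar y$ and $y\mapsto\partial g^*(y)-\bar c$, whose common zero set is $\cY=S_1\cap S_2$. The hypotheses are met: the affine map $y\mapsto\cA^* y-\cA^*\bar y$ satisfies the elementary global bound $d(y,S_1)\le\|\cA^* y-\cA^*\bar y\|/\sigma$, where $\sigma$ is the smallest nonzero singular value of $\cA$; $\partial g^*$ is BMS at $(\bar y,\bar c)$ --- assumed in case (ii), and automatic in case (i) because polyhedral multi-functions are BMS at every point of their graph; and $\{S_1,S_2\}$ has bounded linear regularity because both sets are polyhedral convex with nonempty intersection ($S_1$ is affine, while $S_2=\partial g(\bar c)$ is convex and a finite union of polyhedral sets --- hence a polyhedron --- in case (i), and is assumed polyhedral in case (ii)). This yields a constant $\kappa_0$, depending only on $V$, with
\[
d(y,\cY)\ \le\ \kappa_0\big(\|\cA^* y-\cA^*\bar y\|+d(\bar c,\partial g^*(y))\big),\qquad y\in V.
\]

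\textbf{Closing the loop, and the main obstacle.} Finally, fix $y\in V$ and $v\in\partial D(y)$, write $v=\nabla h(y)+w$ with $w\in\partial g^*(y)$, and let $\hat y\in\cP_\cY(y)$, so that $\cA^*\hat y=\cA^*\bar y$, $\bar c\in\partial g^*(\hat y)$, and $\|y-\hat y\|=d(y,\cY)$. Using $\nabla h(y)+\bar c=-\cA\big(\nabla f^*(-\cA^* y)-\nabla f^*(-\cA^*\hat y)\big)=v-(w-\bar c)$, the strong convexity of $f^*$ (with modulus $\mu$ on a compact convex subset of $\dom f^*$ containing $-\cA^* y$ and $-\cA^*\hat y$), and the monotonicity inequality $\langle w-\bar c,\,y-\hat y\rangle\ge 0$, one obtains
\[
\mu\,\|\cA^*(y-\hat y)\|^2\ \le\ \langle\nabla h(y)+\bar c,\ y-\hat y\rangle\ =\ \langle v,\ y-\hat y\rangle-\langle w-\bar c,\ y-\hat y\rangle\ \le\ \|v\|\,d(y,\cY),
\]
so that $\|\cA^* y-\cA^*\bar y\|=\|\cA^*(y-\hat y)\|\le\sqrt{\|v\|\,d(y,\cY)/\mu}$; likewise $d(\bar c,\partial g^*(y))\le\|w-\bar c\|\le\|v\|+\|\cA\|\,L\,\|\cA^*(y-\hat y)\|$, where $L$ is the local Lipschitz modulus of $\nabla f^*$ on that compact set. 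Substituting both estimates into the bound of the previous paragraph gives $d(y,\cY)\le a\sqrt{\|v\|\,d(y,\cY)}+b\,\|v\|$ for constants $a,b$ depending only on $V$; solving this quadratic inequality in $\sqrt{d(y,\cY)}$ and taking the infimum over $v\in\partial D(y)$ gives $d(y,\cY)\le\kappa\, d(0,\partial D(y))$ with $\kappa=\big((a+\sqrt{a^2+4b})/2\big)^2$. I expect this last step to be the main obstacle: the individual estimate controls $\|\cA^* y-\cA^*\bar y\|$ only to order $\sqrt{d(0,\partial D(y))}$, and it upgrades to a genuine \emph{linear} bound only because $d(y,\cY)$ reappears on the right-hand side through the projection $\hat y$ (replacing $\hat y$ by $\bar y$ would leave a stray factor $\sqrt{\mathrm{diam}\,V}$ and yield only a H\"older estimate). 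A secondary technical point is ensuring that $\mu$ and $L$ can be chosen uniformly in $y\in V$, i.e. that the arguments $-\cA^* y$ remain in a compact subset of the open set $\dom f^*$; this is where the standing hypotheses on $f^*$ are used and is handled as in \cite{Ye2018}.
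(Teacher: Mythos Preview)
The paper does not prove this proposition at all: it is quoted verbatim from \cite[Theorems~2 and~3]{Ye2018}, and no argument appears in the body or the appendix. The only hint the paper gives about the method of proof is the sentence preceding the proposition, noting that Ye et al.\ ``gave some sufficient conditions for the BMS to hold by making use of the bounded metric subregular intersection theorem (see \cite[Proposition~9]{Ye2018}).''

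Your proposal is a faithful reconstruction along exactly those lines: you identify $\cY=S_1\cap S_2$ with $S_1=\{y:\cA^*y=\cA^*\bar y\}$ and $S_2=(\partial g^*)^{-1}(\bar c)$, invoke the intersection theorem to get $d(y,\cY)\le\kappa_0(\|\cA^*y-\cA^*\bar y\|+d(\bar c,\partial g^*(y)))$, and then close the loop by bounding both terms via $d(0,\partial D(y))$ using the local strong convexity and Lipschitz gradient of $f^*$. The quadratic-inequality trick to pass from $d(y,\cY)\le a\sqrt{\|v\|\,d(y,\cY)}+b\|v\|$ to a linear estimate is sound. The two technical points you flag---uniformity of $\mu,L$ over $y\in V$ (which needs $\{-\cA^*y:y\in V,\ \partial D(y)\neq\emptyset\}$ to sit in a compact subset of the open set $\dom f^*$), and the necessity of projecting onto $\cY$ rather than using $\bar y$ itself in the strong-convexity step---are genuine and correctly identified; they are precisely the places where the standing hypotheses on $f^*$ do real work. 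In short, there is nothing in the paper to compare against beyond the one-line pointer to \cite{Ye2018}, and your outline matches that pointer.
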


It is well-known that there is a close relationship between error bound condition and quadratic growth condition. Following the proof technique in \cite[Theorem 4.3]{drusvyatskiy2015quadratic}, it is not hard to verify that the bounded error bound condition \eqref{err} can imply the bounded quadratic growth condition with $(r,\sigma)$, i.e., for any $r>0$ there exists $\sigma>0$ such that
 \begin{equation}\label{QG}
D(y)\ge D^* +  \frac{\sigma}{2} d^2(y,\cY), \quad \forall y\in \BB_r.
 \end{equation}
By using the convexity of $D$, it is also easy to verify that the bounded quadratic growth condition implies the bounded error bound condition \eqref{err}.

Drusvyatskiy and Lewis showed the bounded quadratic growth conditions without assuming the commonly used strong convexity of $f^*$ in \cite[Theorem 4.3]{Drusvyatskiy2018Error}. The main techniques they used are a newly introduced concept called firm convexity and the bounded linear regularity for the following expression that
\begin{equation}\label{solutionset2}
\cY= \partial g(\cA\bar{x})\bigcap (\cA^*)^{-1}(-\partial f(\bar{x})).
\end{equation}

The following results essentially follow from \cite[Theorem 4.3]{Drusvyatskiy2018Error} and \cite[Corollary 4.4]{Drusvyatskiy2018Error}. For the readers' convenience, we give a brief proof in the appendix.

\begin{proposition}\label{prop-1}Let $f$ be an essentially strictly convex function. Then the bounded quadratic growth condition with $(r,\sigma)$ holds if one of the following conditions hold
\begin{itemize}
  \item[(i)] $f_i$ is upper Lipschitz continuous at $\bar{x}_i$ for all $i\in I$, $g_j$ is upper Lipschitz continuous at $\cA_j \bar{x}$ for all $j\in J$, and
\begin{equation}\label{cond2}
0\in \ri(\partial f(\bar{x})+\cA^*\partial g(\cA\bar{x})),
\end{equation}
  \item[(ii)] $\partial f_i$ and $\partial g_j$ are polyhedral for all $i\in I$ and $j\in J$.
\end{itemize}
\end{proposition}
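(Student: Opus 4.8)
The plan is to specialize the framework of Drusvyatskiy and Lewis \cite{Drusvyatskiy2018Error} --- their Theorem~4.3 and Corollary~4.4 --- to the separable data of problems \eqref{p3} and \eqref{d3}. By the equivalences recorded around \eqref{QG}--\eqref{err}, the bounded quadratic growth condition with $(r,\sigma)$ for $D$ is equivalent to the bounded error bound \eqref{err}, hence to the BMS of $\partial D$ at $(\bar y,0)$, hence to the firm convexity of $D$ relative to $0$; establishing any one of these suffices. Writing $A:=\partial g(\cA\bar x)$ and $B:=(\cA^*)^{-1}(-\partial f(\bar x))$, formula \eqref{solutionset2} reads $\cY=A\cap B$, and \cite[Thm.~4.3]{Drusvyatskiy2018Error} produces the bounded quadratic growth of $D$ from two inputs: (a) firm convexity of $f^*$ and $g^*$ relative to $\bar x$ and $\cA\bar x$ respectively, and (b) bounded linear regularity of the pair $\{A,B\}$. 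The hypothesis that $f$ is essentially strictly convex enters exactly here: by \cite[Thm.~26.3]{rockafellar1970convex} it guarantees that $f^*$ is essentially smooth, the regularity of the conjugate presupposed in \cite{Drusvyatskiy2018Error}. It then remains to verify (a) and (b) under hypothesis (i) or under hypothesis (ii).

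\emph{Input (a), via separability.} Since $f=\sum_{i\in I}f_i$ is separable, essential strict convexity of $f$ passes to each $f_i$; dually $f^*=\sum_{i\in I}f_i^*$ and $g^*=\sum_{j\in J}g_j^*$ over the product spaces $\cE_1$ and $\cE_2$, so $\partial f^*=\prod_{i\in I}\partial f_i^*$ and $\partial g^*=\prod_{j\in J}\partial g_j^*$. Because firm convexity of a function coincides with the BMS of its subdifferential \cite{Drusvyatskiy2018Error}, and because the BMS property is stable under finite Cartesian products --- distances, projections and graphs decouple coordinatewise over $\bigoplus$, so one may take the product modulus to be $\max_i\kappa_i$ --- it is enough that every $\partial f_i^*$ and every $\partial g_j^*$ be BMS. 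Under (i), the upper Lipschitz continuity at $\bar x_i$ is precisely what makes Proposition~\ref{pro-bms} applicable with $\psi=f_i^*$ (using $\partial(f_i^*)^{-1}=\partial f_i$), which yields the BMS of $\partial f_i^*$ at $\bar x_i$ and hence the firm convexity of $f_i^*$ relative to $\bar x_i$; the same argument at $\cA_j\bar x$ handles $g_j^*$. Under (ii), polyhedrality of $\partial f_i$ makes $\partial f_i^*=(\partial f_i)^{-1}$ a polyhedral multi-function, hence BMS at every point of its graph (as recorded after Definition~\ref{def-bms}), and likewise for $\partial g_j^*$.

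\emph{Input (b).} Under (ii), polyhedrality of $\partial f_i$ and $\partial g_j$ makes $\partial f(\bar x)$ and $\partial g(\cA\bar x)$ polyhedral, so $A$ and $B$ are polyhedral sets, and a pair of polyhedra is always boundedly (indeed globally, by Hoffman's error bound) linearly regular. Under (i), the upper Lipschitz continuity of the $f_i$ and the $g_j$ first forces $\partial f(\bar x)$ and $\partial g(\cA\bar x)$ to be compact; then the calculus of relative interiors (\cite{rockafellar1970convex}: $\ri(C+\cA^*A)=\ri(C)+\cA^*\ri(A)$ and, when the right-hand side is nonempty, $\ri((\cA^*)^{-1}(-C))=(\cA^*)^{-1}(\ri(-C))$, with $C=\partial f(\bar x)$) turns \eqref{cond2} into $\ri(A)\cap\ri(B)\ne\emptyset$; a standard bounded-linear-regularity criterion --- applicable because the relative interiors of $A$ and $B$ meet and $A$ is compact --- then gives that $\{A,B\}$ is boundedly linearly regular (see \cite{Bauschke1993On,Bauschke2017}). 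Substituting (a) and (b) into \cite[Thm.~4.3, Cor.~4.4]{Drusvyatskiy2018Error} yields the bounded quadratic growth of $D$ with some $(r,\sigma)$, which is the assertion.

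The work is organizational rather than a single hard estimate; the points that need care are: tracking the tilt vectors and sign conventions in \cite{Drusvyatskiy2018Error} so that its firm-convexity hypotheses translate into statements about $f^*$ and $g^*$; passing from the upper Lipschitz hypotheses on the $f_i$ and $g_j$ to firm convexity of $f^*$ and $g^*$ via Proposition~\ref{pro-bms} together with the product-stability of BMS; and extracting bounded linear regularity of $\{A,B\}$ from the relative-interior condition \eqref{cond2}, which needs the relative-interior calculus to commute with $\cA^*$ and with taking preimages --- legitimate here precisely because the upper Lipschitz hypotheses render $\partial f(\bar x)$ and $\partial g(\cA\bar x)$ compact.
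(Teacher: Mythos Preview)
Your proof is correct and follows essentially the same route as the paper: both cases are reduced to the hypotheses of \cite[Theorem~4.3 and Corollary~4.4]{Drusvyatskiy2018Error} by supplying firm convexity of $f^*$ and $g^*$ (via Proposition~\ref{pro-bms} and stability under products --- the paper invokes \cite[Lemma~5]{Drusvyatskiy2018Error} for this step) together with bounded linear regularity of the pair $\{A,B\}$ in \eqref{solutionset2}. One minor correction: compactness of $\partial f(\bar x)$ and $\partial g(\cA\bar x)$ does not follow from the upper Lipschitz hypothesis (e.g.\ $\partial\delta_{[0,\infty)}$ is upper Lipschitz at $0$ with value $(-\infty,0]$) and is not needed --- the relative-interior calculus for sums, linear images, and preimages, as well as the BLR criterion from $\ri(A)\cap\ri(B)\ne\emptyset$, hold for arbitrary closed convex sets.
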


\section{The proposed algorithm}\label{sec:alg}

In this section, we will propose the RDCIAG method based on the certain structure of the dual objective function. Throughout this section, we always assume that  $f$ is strongly convex and hence its conjugate $f^*$ in the dual objective function is gradient-Lipschtiz-continuous, which is not necessarily strongly convex.

\subsection{Warm-up: the random dual block coordinate algorithm}
Recently, the authors of \cite{beck2017first,Necoara2019Linear} introduced (random) dual coordinate descent methods for solving the primal-dual problems \eqref{p3}-\eqref{d3} with $|I|=1$.
Here, we first recall their algorithmic idea. Recall that the dual objective function is $f^*(-\cA^*y)+  \sum_{j\in J}g_j^*(y_j)$ with smooth function $f^*$. Applying the proximal gradient method to the dual problem, we obtain that
\begin{equation}\label{PG1}
y^{k+1}=\arg\min_{y\in \cE_2} \left\{\sum_{j\in J}g_j^*(y_j)+\langle -\cA\nabla f^*(-\cA^*y^k), y-y^k\rangle +\frac{1}{2\alpha^k}\|y-y^k\|^2\right\}.
\end{equation}
In the following we reformulate \eqref{PG1} as a primal-dual scheme by introducing primal variables based on the KKT conditions \eqref{KTcond}.
Let
\begin{equation}\label{primalv}
x^k:=\nabla f^*(-\cA^*y^k)=\nabla f^*( -\sum_{j\in J}\cA_j^*y_j^k);
\end{equation}
then $$-\cA\nabla f^*(-\cA^*y^k)=-\cA x^k=(-\cA_jx^k)_{j\in J}.$$
Note that the objective function in \eqref{PG1} is separable with respect to $y$. In terms of the primal variable $x^k$, the inner product in \eqref{PG1} can be simplified and then the update of dual variables can be rewritten as the following form
\begin{equation}\label{dualv}
y^{k+1}_j=\arg\min_{y_j\in \cE_{2,j}} \left\{ g_j^*(y_j)+\langle -\cA_jx^k, y_j-y_j^k\rangle +\frac{1}{2\alpha^k}\|y_j-y_j^k\|^2\right\},\quad j\in J,
\end{equation}
or equivalently,
\begin{equation}\label{dualv1}
y^{k+1}_j=\prox_{\alpha_kg^*_{j}}(y_{j}^k+\alpha_k\cA_{j}x^k),\quad j\in J.
\end{equation}
Let $y^0\in \cE$ be the initial point. By collecting the primal-dual updates \eqref{primalv}-\eqref{dualv1} and introducing the idea of random coordinate updates, we present the following random primal-dual algorithm: choose uniformly random index $j_k\in J$ and~update
\begin{eqnarray}\label{alg1}
\left\{\begin{array}{lll}
x^k &= &\nabla f^*( -\sum_{j\in J}\cA_j^*y_j^k),    \\
y_{j_k}^{k+1}&=& \prox_{\alpha_kg^*_{j_k}}(y_{j_k}^k+\alpha_k\cA_{j_k}x^k),\\
y_j^{k+1}&=&y_j^k,\quad \forall j\neq j_k.
\end{array} \right.
\end{eqnarray}
The convergence of the dual proximal gradient algorithm \eqref{dualv} and its randomized variant \eqref{alg1} has been extensionally studied; see e.g. \cite{beck2017first,zhang2016new,Peter2014,Necoara2019Linear}.

\subsection{The RDCIAG method}
The algorithm \eqref{alg1} is very general to include the algorithms in \cite{beck2017first,Necoara2019Linear}  as special cases. But it only exploits the separability of $g$. In this subsection, we consider how to further exploit the separability of $f$ in designing algorithms.
Recall that the objective function of the dual problem \eqref{d3} is
$$D(y)=\sum_{i\in I} f_i^*(-\sum_{j\in J}\cA_{ji}^*y_j) +  \sum_{j\in J}g_j^*(y_j).$$
To simplify the notation, we let $h_i(y):=f_i^*(-\sum_{j\in J}\cA_{ji}^*y_j)$ and solve the following problem
$$\Min_{y\in\cE_2} \sum_{i\in I}  h_i(y) +  \sum_{j\in J}g_j^*(y_j).$$
First, let us turn our attention to the first term in the above problem. If the cardinality of the index set $I$ is very large, which happens in big data models and huge-dimensional problems, evaluating the full gradient of $\sum_{i\in I}  h_i(y)$ at some point as done in the algorithm \eqref{alg1} is costly and even prohibitive. On the other hand, in many practical problems such as distributed optimization and network optimization, delay of gradient information update is very common.
To overcome these issues, some practice-driven algorithms are developed such as stochastic gradient-type methods and incremental aggregated gradient-type methods.  Compared with stochastic gradient-type algorithms, incremental aggregated gradient-type methods are much easier to implement in large-scale setting since the latter does not require independent and random sampling in each iteration. Meanwhile, incremental aggregated gradient-type methods usually outperform their stochastic counterparts because they visit each component function at each iterate and update it in a period while in stochastic settings the selection of component functions is at random and some of them may be not visited in each epoch. Besides, in practice incremental methods are broadly employed for a long history in several advanced fields such as  neural networks, reinforcement learning, and optimal control \cite{bertsekas2016nonlinear,bertsekas2019}.  Even though, much less attention is paid to incremental aggregated gradient-type methods possibly due to the difficulty of convergence analysis. During the past few years, some remarkable progresses of determined incremental methods have been made; see e.g. \cite{Bertsekas2015incre,Gurbuzbalaban2015on,Vanli2016,Mert2019Convergence,peng2019,zhang2020}. The study may be viewed as a continuum of these progresses.

By borrowing the ideas of random block coordinate descent algorithms and incremental aggregated gradient methods, we propose the following RDCIAG method: Choose uniformly a random index $j_k\in J$ and update
\begin{eqnarray}\label{alg2}
y_{j_k}^{k+1}=\arg\min\limits_{y_{j_k}\in \cE_{2,{j_k}}}\left\{\Big\langle \sum_{i\in I} \nabla_{j_k} h_i(y^{k-\tau_k^i}), y_{j_k}-y_{j_k}^k\Big\rangle +\frac{1}{2\alpha_k}\|y_{j_k}-y_{j_k}^k\|^2+ g^*_{j_k}(y_{j_k})\right\},
\end{eqnarray}
and let $y_j^{k+1}= y_j^k, \forall j\neq j_k$, or equivalently
\begin{eqnarray}\label{alg3x}
\left\{\begin{array}{lll}
x_i^k &= & \nabla f_i^*( -\sum_{j\in J}\cA_{ji}^*y_j^{k-\tau_k^i}),\quad i\in I   \\
y_{j_k}^{k+1}&=& \prox_{\alpha_kg^*_{j_k}}(y_{j_k}^k+\alpha_k\sum_{i\in I}\cA_{j_ki}x^k_i)\\
y_j^{k+1}&=&y_j^k,\quad \forall j\neq j_k,
\end{array} \right.
\end{eqnarray}
where $\tau_k^i \in [0,\tau]$ for all $k$ and $i$ are delayed indexes and $\tau\ge0$ is the largest delayed factor. The idea of designing dual incremental aggregated methods is not new; see e.g. \cite{A2008Blatt,Bertsekas2015incre}. Our novelty lies in considering the proximal incremental aggregated gradient descent method and the random dual coordinate descent method in a unified way. This unified scheme sufficiently utilizes the separability of both $f$ and $g$, and should be suitable for large-scale problems and distributed problems due to its low computational load in each iteration and its allowance for delayed gradient computation.

In order to analyze the convergence of the RDCIAG method, we introduce an equivalent expression. Define the embedding operator $U_q: \cE_{2,q}\rightarrow \cE_2$ as $U_q y_q=(z_j)_{j\in J}$ with
\begin{eqnarray*}
z_j =
\left\{\begin{array}{lll}
y_q,       &j=q, \\
0,         &\textrm{otherwise},
\end{array} \right.
\end{eqnarray*}
and a determined updated variable $\widetilde{y}^{k+1}$ via
\begin{eqnarray}\label{update1}
\widetilde{y}_{j}^{k+1}=\arg\min\limits_{y_j\in \cE_{2,j}}\left\{\Big\langle \sum_{i\in I} \nabla_j h_i(y^{k-\tau_k^i}), y_j-y_j^k\Big\rangle +\frac{1}{2\alpha_k}\|y_j-y_j^k\|^2+ g^*_{j}(y_j)\right\},\quad j\in J.
\end{eqnarray}
Then the RDCIAG method can be written as
\begin{equation}\label{rpiag}
y^{k+1}=U_{j_k}\widetilde{y}_{j_k}^{k+1} + \sum_{q\neq j_k} U_qy_q^k,
\end{equation}
where the random index $j_k$ is chosen uniformly from the index set $J$.

\section{Convergence analysis}

The convergence analysis of the RDCIAG method is built on two pillars. The first is a tail vanishing lemma, which was proposed in \cite{Aytekin2016} for analyzing the linear convergence of IAG and recently developed to study PIAG and its variants. This result states that if the constant $c$ before the sum of tails $w_j$ with $j$ from $k-k_0$ to $k$ can be controlled by (less than) the constant $b$ before $w_k$, then the linear convergence rate $a$ can be conserved.
\begin{lemma}[\textbf{Pillar one: tail vanishing lemma}] \label{Aylem}
Assume that the nonnegative sequences $\{V_k\}$ and $\{w_k\}$ satisfy
$$V_{k+1}\leq a V_k -b w_k+ c\sum_{j=k-k_0}^k w_j,\quad \forall k\ge0,$$
where $a\in(0,1)$, $b\geq 0$, $c\geq 0$, and $k_0\ge0$. Assume also that $w_k=0$ for all $k<0$, and the following condition holds:
\begin{equation}\label{condition}
\frac{c}{1-a}\frac{1-a^{k_0+1}}{a^{k_0}}\leq b.
\end{equation}
Then $V_k\leq a^k V_0$ for all $k\geq 0$.
\end{lemma}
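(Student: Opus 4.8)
The plan is to establish the bound $V_k \le a^k V_0$ by induction on $k$, but the naive induction on $V_k$ alone does not close because of the tail terms $c\sum_{j=k-k_0}^k w_j$; the delayed $w_j$'s with $j < k$ are not directly controlled by $V_k$. The standard fix (and the one I expect the authors to use) is to introduce an auxiliary Lyapunov-type sequence that absorbs the tail, of the form
\[
\widetilde V_k := V_k + \gamma \sum_{j=k-k_0}^{k-1} \rho_{k-j}\, w_j,
\]
for suitably chosen nonnegative weights $\rho_1,\dots,\rho_{k_0}$ and a scalar $\gamma>0$, and then to show $\widetilde V_{k+1} \le a\, \widetilde V_k$ for all $k$. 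Since $w_j = 0$ for $j<0$, one gets $\widetilde V_0 = V_0$, and since $V_k \le \widetilde V_k \le a^k \widetilde V_0 = a^k V_0$, the claim follows.

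First I would write out $\widetilde V_{k+1} - a\widetilde V_k$ using the hypothesis $V_{k+1}\le aV_k - bw_k + c\sum_{j=k-k_0}^k w_j$, collecting the coefficient of each $w_j$ for $j \in \{k-k_0,\dots,k\}$. The coefficient of $w_k$ will be $c - b + \gamma\rho_1 \cdot(\text{something})$ and the coefficient of $w_{k-\ell}$ for $1\le \ell\le k_0$ will involve $c$, the decay of $\rho$, and the factor $a$; the weights $\rho_\ell$ should be chosen (essentially as a geometric-type sequence in $1/a$, e.g. $\rho_\ell = a^{-(\ell-1)} - a^{-k_0}\cdot(\dots)$, or more cleanly by taking the ansatz that makes all intermediate coefficients vanish) so that every coefficient of $w_{k-\ell}$ for $\ell \ge 1$ is nonpositive, leaving only a condition on the $w_k$ coefficient. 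Imposing that this last coefficient is $\le 0$ should reduce exactly to the stated inequality $\frac{c}{1-a}\cdot\frac{1-a^{k_0+1}}{a^{k_0}} \le b$, where the quantity $\frac{1}{1-a}\cdot\frac{1-a^{k_0+1}}{a^{k_0}} = \sum_{\ell=0}^{k_0} a^{\ell - k_0}$ is precisely the sum of the geometric weights that accumulate from the tail. Throughout one uses $a\in(0,1)$ and $b,c\ge 0$ so all the sign manipulations go the right way, and $w_k\ge0$ is what lets us discard the nonpositive-coefficient terms.

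The main obstacle is bookkeeping: choosing the auxiliary weights $\rho_\ell$ and the constant $\gamma$ so that (a) all the "middle" coefficients (those of $w_{k-\ell}$, $1\le\ell\le k_0$) are simultaneously nonpositive and (b) the residual condition on the $w_k$-coefficient matches the hypothesis \eqref{condition} exactly rather than merely being implied by it. A clean way to do the bookkeeping is to sum the recursion geometrically: multiply the inequality for step $j$ by $a^{-(j+1)}$, sum over $j=0,\dots,k-1$, telescope the $a^{-j}V_j$ terms, and then bound the resulting double sum $\sum_j a^{-(j+1)} c \sum_{i=j-k_0}^{j} w_i$ by interchanging the order of summation and using that each $w_i$ appears at most $k_0+1$ times with weights bounded by $c\, a^{-(i+1)}\sum_{\ell=0}^{k_0} a^{-\ell}$; condition \eqref{condition} is exactly what guarantees this is dominated by the $\sum b\, a^{-(j+1)} w_j$ term so that $a^{-k}V_k \le V_0$. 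I would present whichever of these two routes — the Lyapunov ansatz or the summed/telescoped estimate — yields the constants most transparently; both are routine once the weight choice is pinned down, and I do not expect any conceptual difficulty beyond that.
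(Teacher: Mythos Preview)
The paper does not prove this lemma; it is quoted from \cite{Aytekin2016} and used as a black box. So there is no ``paper's own proof'' to compare against.

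Your second route---multiply the recursion at step $j$ by $a^{-(j+1)}$, sum from $j=0$ to $K-1$, telescope the $V$ terms, and swap the order of summation in the double sum---is correct and is in fact the standard argument. The key arithmetic identity you already noticed,
\[
\frac{1}{1-a}\cdot\frac{1-a^{k_0+1}}{a^{k_0}}=\sum_{\ell=0}^{k_0} a^{-\ell},
\]
makes the comparison between the $c$-term and the $b$-term immediate: after swapping, the coefficient of each $w_i$ (for $0\le i\le K-1$) in the combined right-hand side is at most $a^{-(i+1)}\bigl(c\sum_{\ell=0}^{k_0}a^{-\ell}-b\bigr)\le 0$, and the $w_i$ with $i<0$ vanish by hypothesis. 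Hence $a^{-K}V_K\le V_0$. This is a complete and clean proof; I would present this version rather than the Lyapunov-ansatz version, which in your sketch still has unspecified weights and a vaguer matching of coefficients.

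One small caution on the telescoping bookkeeping: when you upper-bound the inner sum $\sum_{k=\max(0,i)}^{\min(K-1,i+k_0)} a^{-(k+1)}$ by the full window $\sum_{k=i}^{i+k_0} a^{-(k+1)}$, you are enlarging a positive term, which is legitimate since you are establishing an upper bound on $a^{-K}V_K$. Make that explicit so the reader sees the inequality goes the right way.
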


The second pillar is a generalized descent lemma. Almost all the convergence analysis of first-order methods is built on descent-type lemmas \cite{beck2009fast,beck2017first,Teboull2018A,zhang2016new}. In general, descent lemmas can be easily established by using the gradient-Lipschitz-continuous property and optimality conditions. However, in our case, more issues have to be taken into account to deal with the delayed terms and the random block coordinate updates. There are two ways to seek the required descent lemma. The first way is to take the delayed terms as an error term, as done by Bertesekas in \cite{Bertsekas2015incre,Gurbuzbalaban2015on}, and then follow the line of analysis for random block coordinate methods in \cite{Peter2014,Necoara2019Linear}. This way will leave the error term hard to copy with. The second way is more delicate. It first deduces a descent result based on the determined function values  $D(y^k)$ and  $D(\widetilde{y}^{k+1})$ conditioned on $(y^1,\cdots,y^k)$ and then establishes a descent lemma for $D(\widetilde{y}^{k+1})$ following the techniques in \cite{zhang2020}. As presented below, we succeed in the second way. All the missing proofs can be found in the appendix.

First, we show the gradient-Lipschitz-continuous property of $h_i(y)=f_i^*(-\sum_{j\in J}\cA_{ji}^*y_j)$.

\begin{proposition}\label{prop}
Assume that for all $i\in I$, $f_i$ is strongly convex with modulus $\mu_i$. Then for all $i\in I$, $\nabla h_i$ is Lipschtiz continuous with constant $\ell_i$, where
$$\ell_i:=\sqrt{(\sum_{j\in J}\frac{\|\cA_{ji}\|^2}{\mu_i^2}) |J|\max_{j\in J}\{\|\cA_{ji}^*\|^2 \}}.$$
\end{proposition}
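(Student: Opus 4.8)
The plan is to compute the gradient of $h_i$ explicitly via the chain rule and then bound the Lipschitz constant of the resulting composition. Write $h_i(y) = f_i^*(L_i y)$ where $L_i : \cE_2 \to \cE_{1,i}$ is the linear map $L_i y = -\sum_{j\in J}\cA_{ji}^* y_j$. By the chain rule, $\nabla h_i(y) = L_i^* \nabla f_i^*(L_i y)$, where $L_i^* : \cE_{1,i}\to\cE_2$ is the adjoint. The assumption that $f_i$ is strongly convex with modulus $\mu_i$ is, by the well-known duality between strong convexity and gradient-Lipschitz-continuity recalled in the preliminaries, equivalent to $f_i^*$ being differentiable with $\nabla f_i^*$ Lipschitz continuous with constant $1/\mu_i$. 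Hence $h_i$ is differentiable, and for any $y, y'\in\cE_2$,
\begin{equation*}
\|\nabla h_i(y) - \nabla h_i(y')\| = \|L_i^*\big(\nabla f_i^*(L_i y) - \nabla f_i^*(L_i y')\big)\| \le \|L_i^*\|\cdot\frac{1}{\mu_i}\cdot\|L_i\|\cdot\|y - y'\| = \frac{\|L_i\|^2}{\mu_i}\|y-y'\|,
\end{equation*}
using $\|L_i^*\| = \|L_i\|$. So the clean Lipschitz constant is $\|L_i\|^2/\mu_i$.

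The second step is to estimate $\|L_i\|$ in terms of the block norms $\|\cA_{ji}\|$ (equivalently $\|\cA_{ji}^*\|$). For $y = (y_j)_{j\in J}$ with $\|y\|^2 = \sum_{j\in J}\|y_j\|^2 \le 1$, one has $\|L_i y\| = \|\sum_{j\in J}\cA_{ji}^* y_j\| \le \sum_{j\in J}\|\cA_{ji}^*\|\,\|y_j\|$. Applying Cauchy--Schwarz to the right-hand side gives $\sum_{j\in J}\|\cA_{ji}^*\|\,\|y_j\| \le \big(\sum_{j\in J}\|\cA_{ji}^*\|^2\big)^{1/2}\big(\sum_{j\in J}\|y_j\|^2\big)^{1/2} \le \big(\sum_{j\in J}\|\cA_{ji}^*\|^2\big)^{1/2}$, so $\|L_i\|^2 \le \sum_{j\in J}\|\cA_{ji}^*\|^2 = \sum_{j\in J}\|\cA_{ji}\|^2$, yielding the sharper constant $\ell_i' := \big(\sum_{j\in J}\|\cA_{ji}\|^2\big)/\mu_i$. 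To recover the exact expression stated in the proposition, one bounds more crudely: $\sum_{j\in J}\|\cA_{ji}^*\|\,\|y_j\| \le (\max_{j}\|\cA_{ji}^*\|)\sum_{j\in J}\|y_j\| \le (\max_{j}\|\cA_{ji}^*\|)\sqrt{|J|}\big(\sum_{j\in J}\|y_j\|^2\big)^{1/2}$, giving $\|L_i\|^2 \le |J|\max_{j\in J}\|\cA_{ji}^*\|^2$; combining this with $\|L_i\|^2 \le \sum_{j\in J}\|\cA_{ji}\|^2/\mu_i^2$-flavored bounds and taking a geometric-mean-type product of the two estimates reproduces $\ell_i = \sqrt{\big(\sum_{j\in J}\|\cA_{ji}\|^2/\mu_i^2\big)\,|J|\,\max_{j\in J}\|\cA_{ji}^*\|^2}$. (In fact one can write $\ell_i = \sqrt{\ell_i' \cdot \ell_i''}$ for two valid-but-looser constants and invoke that any such value above the true constant is itself a valid Lipschitz constant.)

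The only genuine subtlety — the "main obstacle" — is that the stated $\ell_i$ is not simply $\|L_i\|^2/\mu_i$ but an artifact of how the two crude bounds on $\|L_i\|^2$ are merged; I would therefore present the clean argument first (establishing that $\|L_i\|^2/\mu_i$ works), then remark that, since $\|L_i\|^2 \le \sqrt{(\sum_{j}\|\cA_{ji}\|^2)\,|J|\,\max_j\|\cA_{ji}^*\|^2}\cdot\frac{1}{?}$ — actually one should just verify directly that $\|L_i\|^2/\mu_i \le \ell_i$ so that $\ell_i$ is a legitimate (if suboptimal) Lipschitz constant. Concretely, $\|L_i\|^2 \le \min\{\sum_j\|\cA_{ji}\|^2,\ |J|\max_j\|\cA_{ji}\|^2\} \le \sqrt{\sum_j\|\cA_{ji}\|^2}\cdot\sqrt{|J|\max_j\|\cA_{ji}\|^2}$, and dividing by $\mu_i$ and comparing with $\ell_i = \sqrt{(\sum_j\|\cA_{ji}\|^2/\mu_i^2)|J|\max_j\|\cA_{ji}^*\|^2}$ shows $\|L_i\|^2/\mu_i \le \ell_i$ provided $\mu_i \le 1$ (or after absorbing constants); I would state the proposition's bound as is and point out it follows from the sharper $\|L_i\|^2/\mu_i$ together with this elementary comparison. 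Everything else is routine: chain rule, adjoint-norm equality, Cauchy--Schwarz, and the strong-convexity/Lipschitz-gradient duality quoted from \cite{Hiriart2004}.
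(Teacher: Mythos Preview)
Your operator-norm approach is correct and in fact sharper than the paper's, but your final reconciliation with the stated constant contains a spurious condition that you should remove. The paper does \emph{not} go through $\|L_i\|^2/\mu_i$. Instead it works componentwise: writing $\nabla_j h_i(y) = -\cA_{ji}\nabla f_i^*(L_i y)$, it bounds
\[
\|\nabla h_i(y)-\nabla h_i(y')\|^2 = \sum_{j\in J}\|\cA_{ji}\big(\nabla f_i^*(L_i y')-\nabla f_i^*(L_i y)\big)\|^2 \le \Big(\sum_{j\in J}\tfrac{\|\cA_{ji}\|^2}{\mu_i^2}\Big)\|L_i(y-y')\|^2,
\]
and then estimates the input side via $\|\sum_j \cA_{ji}^*(y_j-y_j')\|^2 \le |J|\sum_j\|\cA_{ji}^*\|^2\|y_j-y_j'\|^2 \le |J|\max_j\|\cA_{ji}^*\|^2\,\|y-y'\|^2$ (Jensen plus a max bound). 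The two factors combine to give exactly $\ell_i^2$. So the asymmetric treatment of the ``output'' and ``input'' sides of $L_i$ is what produces the stated constant; your symmetric bound $\|L_i^*\|\cdot\|L_i\|/\mu_i = \|L_i\|^2/\mu_i$ is cleaner and never larger.

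The only defect in your write-up is the last paragraph. The claim ``$\|L_i\|^2/\mu_i \le \ell_i$ provided $\mu_i\le 1$'' is wrong in that the proviso is unnecessary: since $\ell_i = \mu_i^{-1}\sqrt{(\sum_j\|\cA_{ji}\|^2)\,|J|\max_j\|\cA_{ji}\|^2}$ (just pull the $\mu_i^{-2}$ out of the square root), and since you already showed $\|L_i\|^2 \le \min\{\sum_j\|\cA_{ji}\|^2,\ |J|\max_j\|\cA_{ji}\|^2\} \le \sqrt{(\sum_j\|\cA_{ji}\|^2)\,|J|\max_j\|\cA_{ji}\|^2}$, dividing both sides by $\mu_i$ gives $\|L_i\|^2/\mu_i \le \ell_i$ unconditionally. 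Clean that up and drop the hand-wavy ``geometric-mean-type product'' and ``$\frac{1}{?}$'' sentences; the argument is complete without them.
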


The following is a descent result in terms of the determined function values $D(y^k)$ and $D(\widetilde{y}^{k+1})$, conditioned on $(y^1,\cdots,y^k)$.
\begin{proposition}\label{flem1}
Let $\eta_1:=\frac{(|J|-1)\sum_{i\in I}\ell_i}{|J|}$ and $\xi_k:=(y^1,\cdots,y^k)$ with $\ell_i$ defined as in Proposition \ref{prop}. Then we have
\begin{equation}\label{fina1}
\EE_{j_k}[D(y^{k+1})|\xi_k] \leq  \frac{|J|-1}{|J|}D(y^k)+\frac{1}{|J|}D(\widetilde{y}^{k+1})+ \frac{\eta_1}{|J|}\|\widetilde{y}^{k+1}-y^k\|^2.
\end{equation}
\end{proposition}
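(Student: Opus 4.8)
The plan is to compute the conditional expectation over the uniform random index $j_k$ explicitly, then handle the smooth part $\sum_{i\in I}h_i$ by a blockwise descent estimate and the separable part $\sum_{j\in J}g_j^*$ exactly. Condition on $\xi_k=(y^1,\dots,y^k)$, so that $y^k$ and the determined point $\widetilde y^{k+1}$ of \eqref{update1} are fixed. For each $j\in J$ set $z^{(j)}:=U_j\widetilde y_j^{k+1}+\sum_{q\ne j}U_qy_q^k$, i.e.\ the point that agrees with $y^k$ in every block $q\ne j$ and equals $\widetilde y_j^{k+1}$ in block $j$. By \eqref{rpiag} we have $y^{k+1}=z^{(j_k)}$, hence
\[
\EE_{j_k}[D(y^{k+1})|\xi_k]=\frac1{|J|}\sum_{j\in J}D\big(z^{(j)}\big).
\]
Writing $D=H+G$ with $H(y):=\sum_{i\in I}h_i(y)$ and $G(y):=\sum_{j\in J}g_j^*(y_j)$, I will bound $\sum_{j\in J}H(z^{(j)})$ and $\sum_{j\in J}G(z^{(j)})$ separately; the recurring fact is that $z^{(j)}-y^k=U_j(\widetilde y_j^{k+1}-y_j^k)$ is supported on the single block $j$.

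For $G$, since $z^{(j)}$ differs from $y^k$ only in block $j$, $G(z^{(j)})=G(y^k)-g_j^*(y_j^k)+g_j^*(\widetilde y_j^{k+1})$, and summing over $j\in J$ yields the exact identity $\sum_{j\in J}G(z^{(j)})=(|J|-1)G(y^k)+G(\widetilde y^{k+1})$. For $H$, fix $i\in I$; by Proposition \ref{prop}, $h_i$ is $\ell_i$-gradient-Lipschitz (and convex, being $f_i^*$ composed with a linear map), so the descent lemma applied to $y^k$ and $z^{(j)}$ gives $h_i(z^{(j)})\le h_i(y^k)+\langle\nabla h_i(y^k),z^{(j)}-y^k\rangle+\tfrac{\ell_i}{2}\|z^{(j)}-y^k\|^2$. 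Since $z^{(j)}-y^k$ lives in block $j$, the inner product here is $\langle\nabla_jh_i(y^k),\widetilde y_j^{k+1}-y_j^k\rangle$ and the squared norm is $\|\widetilde y_j^{k+1}-y_j^k\|^2$; summing over $j\in J$, these reassemble (using the block structure of the inner product on $\cE_2$) into $\langle\nabla h_i(y^k),\widetilde y^{k+1}-y^k\rangle$ and $\|\widetilde y^{k+1}-y^k\|^2$, so $\sum_{j\in J}h_i(z^{(j)})\le|J|\,h_i(y^k)+\langle\nabla h_i(y^k),\widetilde y^{k+1}-y^k\rangle+\tfrac{\ell_i}{2}\|\widetilde y^{k+1}-y^k\|^2$. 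Bounding the inner product by convexity, $\langle\nabla h_i(y^k),\widetilde y^{k+1}-y^k\rangle\le h_i(\widetilde y^{k+1})-h_i(y^k)$, and then summing over $i\in I$ gives $\sum_{j\in J}H(z^{(j)})\le(|J|-1)H(y^k)+H(\widetilde y^{k+1})+\tfrac12\big(\sum_{i\in I}\ell_i\big)\|\widetilde y^{k+1}-y^k\|^2$.

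Adding the two estimates, dividing by $|J|$, and recalling $D=H+G$, I obtain
\[
\EE_{j_k}[D(y^{k+1})|\xi_k]\le\frac{|J|-1}{|J|}D(y^k)+\frac1{|J|}D(\widetilde y^{k+1})+\frac{\sum_{i\in I}\ell_i}{2|J|}\|\widetilde y^{k+1}-y^k\|^2 .
\]
Since $\tfrac12\le\tfrac{|J|-1}{|J|}$ whenever $|J|\ge2$, we have $\tfrac{\sum_{i\in I}\ell_i}{2|J|}\le\tfrac{\eta_1}{|J|}$ with $\eta_1=\tfrac{(|J|-1)\sum_{i\in I}\ell_i}{|J|}$, which is exactly \eqref{fina1}; the degenerate case $|J|=1$ is immediate since then $y^{k+1}=\widetilde y^{k+1}$ and $\eta_1=0$. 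The one delicate point is the bookkeeping when summing the block-localized descent inequalities over $j\in J$ — recognizing that the per-block inner products and squared norms add up to the full-space quantities $\langle\nabla h_i(y^k),\widetilde y^{k+1}-y^k\rangle$ and $\|\widetilde y^{k+1}-y^k\|^2$, and that the separable term telescopes exactly. Observe that neither the delays $\tau_k^i$ nor the minimality characterization of $\widetilde y^{k+1}$ from \eqref{update1} is used here; those enter only later, when one bounds $D(\widetilde y^{k+1})$ in terms of $D(y^k)$.
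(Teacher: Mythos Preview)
Your proof is correct and in fact yields a slightly sharper inequality than the paper's own argument. The paper proceeds in four steps: after the same exact computation for the separable part $G$, it applies the descent lemma for each $h_i$ \emph{twice}---once centered at $y^k$ (its Step~2) and once centered at $\widetilde y^{k+1}$ (its Step~3)---then forms the convex combination $\tfrac{|J|-1}{|J|}\times(\text{Step 2})+\tfrac{1}{|J|}\times(\text{Step 3})$ and uses monotonicity of $\nabla h_i$ to discard the cross term $\langle\nabla h_i(y^k)-\nabla h_i(\widetilde y^{k+1}),\widetilde y^{k+1}-y^k\rangle\le 0$, landing exactly on the coefficient $\tfrac{\eta_1}{|J|}=\tfrac{(|J|-1)\sum_i\ell_i}{|J|^2}$.

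You instead apply the descent lemma only once (around $y^k$) and absorb the resulting inner product via plain convexity, $\langle\nabla h_i(y^k),\widetilde y^{k+1}-y^k\rangle\le h_i(\widetilde y^{k+1})-h_i(y^k)$. This gives the coefficient $\tfrac{\sum_i\ell_i}{2|J|}$, which satisfies $\tfrac{\sum_i\ell_i}{2|J|}\le\tfrac{(|J|-1)\sum_i\ell_i}{|J|^2}$ for all $|J|\ge 2$ (with equality at $|J|=2$). So your route is shorter, uses only convexity rather than gradient monotonicity, and produces a constant that is never worse and is strictly better for $|J|>2$; the final relaxation to $\eta_1/|J|$ is just to match the statement as written.
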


We are ready to present the required descent-type lemma for the convergence analysis.

\begin{lemma}[\textbf{Pillar two: descent-type lemma}] \label{flem2}
Let $\eta_1$  and $\xi_k$ be defined as in Proposition \ref{flem1} and let $\eta_2:=\frac{\ell_{\max}|I|(\tau+1)}{2}$ with $\ell_{\max}:=\max_{i\in I}\{\ell_i\}$. Then it follows that for all $y\in \cE_2$,
  \begin{eqnarray}\label{descent}
\EE_{j_k}[D(y^{k+1})|\xi_k]  & \leq  &\frac{|J|-1}{|J|}D(y^k)+\frac{1}{|J|}D(y)+ \left(\frac{\eta_1+\eta_2}{|J|}-\frac{1}{2\alpha |J|}\right)\|\widetilde{y}^{k+1}-y^k\|^2  \nonumber \\
  &&  +\frac{1}{2\alpha |J|} \|y-y^k\|^2 -\frac{1}{2\alpha|J|} \|y-\widetilde{y}^{k+1}\|^2+  \frac{\eta_2}{|J|}\sum_{s=k-\tau}^{k-1} \|y^{s+1}-y^s\|^2 .
 \end{eqnarray}
\end{lemma}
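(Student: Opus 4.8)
The plan is to peel off the randomness first: Proposition~\ref{flem1} already replaces the randomized iterate $y^{k+1}$ by a convex combination of $D(y^k)$, $D(\widetilde{y}^{k+1})$ and $\|\widetilde{y}^{k+1}-y^k\|^2$, so it remains to establish a purely \emph{deterministic}, PIAG-type descent estimate for $D(\widetilde{y}^{k+1})$ and then substitute it into \eqref{fina1}. To this end write $D=H+G$ with $H(y):=\sum_{i\in I}h_i(y)$ and $G(y):=\sum_{j\in J}g_j^*(y)$ (so $G$ is block separable), and let $g^k:=\sum_{i\in I}\nabla h_i(y^{k-\tau_k^i})$ denote the delayed aggregated gradient. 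Because $G$ is separable, the block-wise rule \eqref{update1} is exactly the joint proximal step $\widetilde{y}^{k+1}=\arg\min_{y}\{\langle g^k,y-y^k\rangle+\frac{1}{2\alpha}\|y-y^k\|^2+G(y)\}$, whose optimality condition reads $-g^k-\frac{1}{\alpha}(\widetilde{y}^{k+1}-y^k)\in\partial G(\widetilde{y}^{k+1})$. I claim it suffices to prove, for all $y\in\cE_2$,
$$D(\widetilde{y}^{k+1})\le D(y)+\Big(\eta_2-\frac{1}{2\alpha}\Big)\|\widetilde{y}^{k+1}-y^k\|^2+\frac{1}{2\alpha}\|y-y^k\|^2-\frac{1}{2\alpha}\|y-\widetilde{y}^{k+1}\|^2+\eta_2\sum_{s=k-\tau}^{k-1}\|y^{s+1}-y^s\|^2 ,$$
since plugging this into \eqref{fina1} and collecting the $\|\widetilde{y}^{k+1}-y^k\|^2$ coefficients $\frac{\eta_1}{|J|}+\frac{1}{|J|}(\eta_2-\frac{1}{2\alpha})=\frac{\eta_1+\eta_2}{|J|}-\frac{1}{2\alpha|J|}$ reproduces \eqref{descent} verbatim.

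For the deterministic estimate I would add two inequalities. First, convexity of $G$ tested against the subgradient $-g^k-\frac{1}{\alpha}(\widetilde{y}^{k+1}-y^k)$, together with the three-point identity $\langle \widetilde{y}^{k+1}-y^k,\,y-\widetilde{y}^{k+1}\rangle=\frac12(\|y-y^k\|^2-\|\widetilde{y}^{k+1}-y^k\|^2-\|y-\widetilde{y}^{k+1}\|^2)$, gives
$$G(\widetilde{y}^{k+1})\le G(y)+\langle g^k,\,y-\widetilde{y}^{k+1}\rangle+\frac{1}{2\alpha}\big(\|y-y^k\|^2-\|\widetilde{y}^{k+1}-y^k\|^2-\|y-\widetilde{y}^{k+1}\|^2\big).$$
Second, for each $i\in I$ apply the descent lemma for the $\ell_i$-smooth $h_i$ (Proposition~\ref{prop}; note $h_i$ is genuinely differentiable everywhere because $f_i$ is strongly convex) anchored at the \emph{delayed} point $y^{k-\tau_k^i}$, and subtract the convexity inequality $h_i(y)\ge h_i(y^{k-\tau_k^i})+\langle\nabla h_i(y^{k-\tau_k^i}),\,y-y^{k-\tau_k^i}\rangle$; summing over $i$ yields
$$H(\widetilde{y}^{k+1})\le H(y)+\langle g^k,\,\widetilde{y}^{k+1}-y\rangle+\sum_{i\in I}\frac{\ell_i}{2}\|\widetilde{y}^{k+1}-y^{k-\tau_k^i}\|^2 .$$
Adding the two displays, the terms linear in $g^k$ cancel, leaving
$$D(\widetilde{y}^{k+1})\le D(y)+\frac{1}{2\alpha}\big(\|y-y^k\|^2-\|\widetilde{y}^{k+1}-y^k\|^2-\|y-\widetilde{y}^{k+1}\|^2\big)+\sum_{i\in I}\frac{\ell_i}{2}\|\widetilde{y}^{k+1}-y^{k-\tau_k^i}\|^2 .$$

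The remaining step, and the one I expect to require the most care, is bounding the delay error $\sum_{i\in I}\frac{\ell_i}{2}\|\widetilde{y}^{k+1}-y^{k-\tau_k^i}\|^2$ and matching the constant to $\eta_2=\frac{\ell_{\max}|I|(\tau+1)}{2}$. Telescoping, $\widetilde{y}^{k+1}-y^{k-\tau_k^i}=(\widetilde{y}^{k+1}-y^k)+\sum_{s=k-\tau_k^i}^{k-1}(y^{s+1}-y^s)$ is a sum of at most $\tau_k^i+1\le\tau+1$ vectors, so the Cauchy--Schwarz bound $\|\sum_{m=1}^{n}a_m\|^2\le n\sum_{m=1}^{n}\|a_m\|^2$ gives $\|\widetilde{y}^{k+1}-y^{k-\tau_k^i}\|^2\le(\tau+1)\big(\|\widetilde{y}^{k+1}-y^k\|^2+\sum_{s=k-\tau}^{k-1}\|y^{s+1}-y^s\|^2\big)$, where the inner sum is harmlessly extended to start at $k-\tau$ (the extra increments are nonnegative, with the convention $y^{s+1}-y^s=0$ for $s<0$). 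Using $\ell_i\le\ell_{\max}$ and $\sum_{i\in I}1=|I|$ then produces $\sum_{i\in I}\frac{\ell_i}{2}\|\widetilde{y}^{k+1}-y^{k-\tau_k^i}\|^2\le\eta_2\big(\|\widetilde{y}^{k+1}-y^k\|^2+\sum_{s=k-\tau}^{k-1}\|y^{s+1}-y^s\|^2\big)$, and substituting this into the last display gives exactly the claimed deterministic bound; Proposition~\ref{flem1} then closes the argument. Besides this delay bookkeeping, the points needing attention are the sign conventions in the three-point identity, the justification that the block updates \eqref{update1} coincide with a single proximal step on $G$ (separability), and the deliberate choice to absorb $\sum_i\ell_i$ into $|I|\ell_{\max}$ so that the tail and current-step coefficients are equal — which is what makes the tail-vanishing Lemma~\ref{Aylem} applicable later.
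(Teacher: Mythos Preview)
Your proposal is correct and follows essentially the same route as the paper's proof: the paper also first derives the deterministic PIAG-type bound $D(\widetilde{y}^{k+1})\le D(y)+(\eta_2-\frac{1}{2\alpha})\|\widetilde{y}^{k+1}-y^k\|^2+\frac{1}{2\alpha}\|y-y^k\|^2-\frac{1}{2\alpha}\|y-\widetilde{y}^{k+1}\|^2+\eta_2\sum_{s=k-\tau}^{k-1}\|y^{s+1}-y^s\|^2$ via (i) the descent lemma for $h_i$ at the delayed anchor plus convexity, (ii) the optimality/subgradient inequality for $g_j^*$, (iii) the three-point identity, and (iv) the telescoping Cauchy--Schwarz delay bound, and then substitutes this into Proposition~\ref{flem1}. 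The only differences are cosmetic ordering (the paper postpones the three-point identity to the end and treats the $H$-part before the $G$-part), so your plan matches the paper's argument.
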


 \subsection{The convergence result}

In this subsection, we present the main convergence result for the RDCIAG method. The following result shows that the linear convergence of the PG method under error bound conditions could be kept even one uses delayed gradient information and randomly updates coordinate blocks.

\begin{theorem}\label{mainf2}
 Define the Lyapunov function
 $$\Gamma_\alpha(y):=D(y)-D^*+\frac{1}{2\alpha}d^2(y,\cY).$$
Assume that the bounded quadratic growth condition  with $(r,\sigma)$ in \eqref{QG} holds such that the iterate sequence $\{y^k\}\subseteq  \BB_r$.
If the stepsize $\alpha_k\equiv \alpha$ satisfies
 \begin{equation}\label{ss}
\alpha \leq \min\left\{\frac{z_0}{\sigma}, \frac{\eta_2}{8|J|}, \frac{1}{4(\eta_1+\eta_2)}\right\},
\end{equation}
where $z_0$ is the solution to the equation \eqref{inequ3} and $\eta_1,\eta_2$ are defined as in Lemma \ref{flem2}, then the following linear convergence results hold
 \begin{equation}\label{licon}
\EE[\Gamma_\alpha(y^k)]\leq  \left(1-\frac{\alpha\sigma}{|J|(1+\alpha\sigma)} \right)^k \Gamma_\alpha(y^0),
\end{equation}
and
 \begin{equation}\label{liconofx}
\EE[\|x^k-\bar{x}\|^2]\leq \left(2\alpha\Gamma_\alpha(y^0) \sum_{i\in I}\sum_{j\in J}\frac{\|\cA_{ji}\|^2}{\mu_i^2}\right)\left(1-\frac{\alpha\sigma}{|J|(1+\alpha\sigma)} \right)^{k-\tau}.
 \end{equation}
\end{theorem}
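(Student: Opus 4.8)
The plan is to feed the descent-type lemma (Lemma~\ref{flem2}) into the tail vanishing lemma (Lemma~\ref{Aylem}) with a judicious choice of the vector $y$ and of the Lyapunov function, so that all the $\|y^{s+1}-y^s\|^2$-type error terms can be absorbed. First I would take $y$ in \eqref{descent} to be the projection $\cP_{\cY}(y^k)$ (or more precisely an element realizing $d(y^k,\cY)$), so that $D(y)=D^*$ and the term $\frac{1}{2\alpha|J|}\|y-y^k\|^2$ becomes $\frac{1}{2\alpha|J|}d^2(y^k,\cY)$, while $-\frac{1}{2\alpha|J|}\|y-\widetilde{y}^{k+1}\|^2 \le -\frac{1}{2\alpha|J|}d^2(\widetilde{y}^{k+1},\cY)$. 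Subtracting $D^*$ from both sides and rearranging then yields a recursion of the form
\[
\EE_{j_k}[\Gamma_\alpha(y^{k+1})\mid\xi_k] \;\le\; \Gamma_\alpha(y^k) - \frac{1}{|J|}\big(\,D(y^k)-D^*\,\big) + \Big(\tfrac{\eta_1+\eta_2}{|J|}-\tfrac{1}{2\alpha|J|}\Big)\|\widetilde{y}^{k+1}-y^k\|^2 + \tfrac{\eta_2}{|J|}\sum_{s=k-\tau}^{k-1}\|y^{s+1}-y^s\|^2,
\]
after identifying $\Gamma_\alpha(\widetilde{y}^{k+1})$-like quantities on the left; some care is needed here because the left side is a conditional expectation and $\widetilde{y}^{k+1}$ is deterministic given $\xi_k$, so one has $\EE_{j_k}[d^2(y^{k+1},\cY)\mid\xi_k] = \frac{|J|-1}{|J|}d^2(y^k,\cY) + \frac{1}{|J|}d^2(\widetilde y^{k+1},\cY)$-type splitting to manage. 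Applying the bounded quadratic growth condition \eqref{QG}, which is in force since $\{y^k\}\subseteq\BB_r$, gives $D(y^k)-D^* \ge \frac{\sigma}{2}d^2(y^k,\cY)$, and hence $-\frac{1}{|J|}(D(y^k)-D^*) \le -\frac{\sigma}{2|J|}d^2(y^k,\cY)$, which lets me extract a genuine contraction factor $a := 1-\frac{\alpha\sigma}{|J|(1+\alpha\sigma)}$ on $\Gamma_\alpha(y^k)$ — the algebra here is exactly the standard ``$D(y^k)-D^* \ge \frac{\sigma}{2}d^2$ plus the $\frac{1}{2\alpha}d^2$ stored term yields geometric decay'' computation, and I expect the defining equation \eqref{inequ3} for $z_0$ to be precisely what makes the stepsize bound $\alpha \le z_0/\sigma$ compatible with this contraction.

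Next I would take expectations over the whole history, set $V_k := \EE[\Gamma_\alpha(y^k)]$ and $w_k := \EE[\|y^{k+1}-y^k\|^2]$, and note the relation between $\widetilde{y}^{k+1}-y^k$ and $y^{k+1}-y^k$: since $y^{k+1}$ differs from $y^k$ only in the block $j_k$, which is uniform over $J$, one has $\EE_{j_k}[\|y^{k+1}-y^k\|^2\mid\xi_k] = \frac{1}{|J|}\|\widetilde{y}^{k+1}-y^k\|^2$. Thus the coefficient $\big(\frac{\eta_1+\eta_2}{|J|}-\frac{1}{2\alpha|J|}\big)\|\widetilde{y}^{k+1}-y^k\|^2$ becomes $(\eta_1+\eta_2-\frac{1}{2\alpha})\,w_k$ in expectation, which is $\le 0$ (hence a $-bw_k$ term with $b = \frac{1}{2\alpha}-\eta_1-\eta_2 \ge \eta_1+\eta_2$, using $\alpha \le \frac{1}{4(\eta_1+\eta_2)}$), and the tail term contributes $c\sum_{s=k-\tau}^{k-1}w_s$ with $c = \frac{\eta_2}{|J|}$ — after a harmless shift of summation index this fits the pattern of Lemma~\ref{Aylem} with $k_0 = \tau$. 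It then remains to verify the key numeric condition \eqref{condition}, namely $\frac{c}{1-a}\frac{1-a^{\tau+1}}{a^{\tau}} \le b$; I would bound $\frac{1-a^{\tau+1}}{a^\tau} \le \frac{\tau+1}{a^\tau}$ and, since $a$ is close to $1$, control $a^{-\tau}$ by a constant (this is where $\alpha \le \frac{\eta_2}{8|J|}$ and the precise form of $\eta_2 = \frac{\ell_{\max}|I|(\tau+1)}{2}$ enter, making $\frac{c}{1-a}(\tau+1)a^{-\tau}$ small relative to $b\approx\frac{1}{2\alpha}$). Lemma~\ref{Aylem} then delivers $V_k \le a^k V_0$, which is exactly \eqref{licon}.

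For the second conclusion \eqref{liconofx}, I would use $x_i^k = \nabla f_i^*(-\sum_j \cA_{ji}^* y_j^{k-\tau_k^i})$ together with the KKT relation $\bar x_i = \nabla f_i^*(-\sum_j\cA_{ji}^*\bar y_j)$ for any $\bar y\in\cY$ (valid by Lemma~\ref{FRthm}(ii) and \eqref{primalv}); since each $f_i$ is strongly convex with modulus $\mu_i$, its conjugate gradient $\nabla f_i^*$ is $(1/\mu_i)$-Lipschitz, so $\|x_i^k-\bar x_i\|^2 \le \frac{1}{\mu_i^2}\|\sum_j\cA_{ji}^*(y_j^{k-\tau_k^i}-\bar y_j)\|^2 \le \frac{1}{\mu_i^2}\big(\sum_j\|\cA_{ji}\|\,\|y_j^{k-\tau_k^i}-\bar y_j\|\big)^2$; summing over $i$, choosing $\bar y = \cP_\cY(y^{k-\tau_k^i})$, using Cauchy--Schwarz to pass to $\sum_j\|\cA_{ji}\|^2\,d^2(y^{k-\tau_k^i},\cY)$, taking expectations, bounding $d^2(y^{k-\tau},\cY)\le \frac{2\alpha}{1}\Gamma_\alpha(y^{k-\tau})$ since $\frac{1}{2\alpha}d^2(y,\cY)\le\Gamma_\alpha(y)$, and invoking \eqref{licon} at index $k-\tau$ gives \eqref{liconofx} up to the stated constant. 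The main obstacle, I expect, is the bookkeeping in the first step — cleanly turning the one-step bound of Lemma~\ref{flem2} (which mixes a conditional expectation on the left with deterministic quantities $D(\widetilde y^{k+1})$, $\|\widetilde y^{k+1}-y^k\|^2$ on the right) into a clean full-expectation recursion in the two scalar sequences $V_k,w_k$ with the correct constants, and then checking that the explicit stepsize thresholds in \eqref{ss} are exactly what make both $b\ge 0$ large enough and condition \eqref{condition} hold; the rest is either standard strongly-convex-conjugate estimates or a direct appeal to the two pillar lemmas.
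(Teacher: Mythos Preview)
Your overall plan is the same as the paper's: plug $y=\bar y^k:=\cP_{\cY}(y^k)$ into Lemma~\ref{flem2}, combine with quadratic growth to get a contraction on $\Gamma_\alpha$, take full expectations to reduce to scalar sequences $V_k,w_k$, and apply Lemma~\ref{Aylem}. The primal estimate \eqref{liconofx} is also handled exactly as you outline.

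There is, however, a genuine gap in the step you flag as ``some care is needed''. The identity
\[
\EE_{j_k}\big[d^2(y^{k+1},\cY)\mid\xi_k\big]\;=\;\tfrac{|J|-1}{|J|}\,d^2(y^k,\cY)+\tfrac{1}{|J|}\,d^2(\widetilde y^{k+1},\cY)
\]
is \emph{false}, and it fails even as an inequality $\le$. A two-block counterexample: take $\cY=\{(t,t):t\in\RR\}$, $y^k=(1,1)$, $\widetilde y^{k+1}=(-1,-1)$; both lie on $\cY$, so the right side is $0$, while each mixed point $(\mp1,\pm1)$ has squared distance $2$ to $\cY$, giving left side $2$. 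Consequently the relaxation $-\tfrac{1}{2\alpha|J|}\|y-\widetilde y^{k+1}\|^2\le -\tfrac{1}{2\alpha|J|}d^2(\widetilde y^{k+1},\cY)$ followed by such a split does not yield your displayed recursion.

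The fix (and what the paper does) is to \emph{not} relax $\|y-\widetilde y^{k+1}\|^2$ at all: keep it as $\|\bar y^k-\widetilde y^{k+1}\|^2$, and bound the conditional expectation of the distance via the \emph{same} anchor $\bar y^k$, namely
\[
\EE_{j_k}\big[d^2(y^{k+1},\cY)\mid\xi_k\big]\;\le\;\EE_{j_k}\big[\|y^{k+1}-\bar y^k\|^2\mid\xi_k\big]\;=\;\tfrac{|J|-1}{|J|}\,d^2(y^k,\cY)+\tfrac{1}{|J|}\,\|\widetilde y^{k+1}-\bar y^k\|^2,
\]
which \emph{is} valid because $d^2(y^{k+1},\cY)\le\|y^{k+1}-\bar y^k\|^2$ for the fixed $\bar y^k\in\cY$ and the right-hand norm splits block-wise. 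The two $\|\widetilde y^{k+1}-\bar y^k\|^2$ terms then cancel exactly, and your displayed recursion (equivalently the paper's \eqref{fina2}) follows.

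A smaller point: your crude estimate $\tfrac{1-a^{\tau+1}}{a^\tau}\le(\tau+1)(1-a)a^{-\tau}$ will give \emph{a} sufficient stepsize condition but not the exact one stated in the theorem. The paper instead rewrites \eqref{condition} directly in terms of $z=\alpha\sigma$ and $\beta=1-\tfrac{1}{|J|}$ (using $\alpha\le\tfrac{c}{8}=\tfrac{\eta_2}{8|J|}$ along the way) to arrive at the monotone scalar equation \eqref{inequ3}, whose root is the $z_0$ appearing in \eqref{ss}.
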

\begin{proof}
Let $\bar{y}^k:=\cP_\cY(y^k)$ be the projection of $y^k$ onto $\cY$. First,  noting that $\|\bar{y}^k-y^{k+1}\|^2\geq d^2(y^{k+1},\cY)$, we have
   \begin{eqnarray}\label{dist}
 \EE_{j_k}[d^2(y^{k+1}, \cY)|\xi_k]&\leq& \EE_{j_k}[ \|\bar{y}^k-y^{k+1}\|^2|\xi_k]\nonumber\\
 &=& \frac{1}{|J|}\sum_{j\in J}\|U_j\widetilde{y}_j^{k+1}+ \sum_{q\neq j}U_qy^k_q-\bar{y}^k\|^2\nonumber\\
 &= & \frac{1}{|J|}\sum_{j\in J} \left(\|\widetilde{y}_j^{k+1}-\bar{y}_j^k\|^2+ \sum_{q\neq j}\|y^k_q-\bar{y}_q^k\|^2\right)\nonumber\\
 &\leq &\frac{1}{|J|}\|\widetilde{y}^{k+1}-\bar{y}^k\|^2+\frac{|J|-1}{|J|}d^2(y^k,\cY).
 \end{eqnarray}
Using \eqref{descent} with $y=\bar{y}^k$ in Lemma \ref{flem2} and \eqref{dist}, and noting that $D(\bar{y}^k)=D^*$, we obtain
    \begin{eqnarray}\label{fina2}
  \EE_{j_k}[\Gamma_\alpha(y^{k+1})|\xi_k] &\leq&  \frac{|J|-1}{|J|}\Gamma_\alpha(y^k)+\frac{1}{2\alpha |J|}d^2(y^k,\cY)\nonumber\\
  &&-\left(\frac{1}{2\alpha}-\eta_1-\eta_2\right)\frac{1}{|J|}\|\widetilde{y}^{k+1}-y^k\|^2+ \frac{\eta_2}{|J|} \sum_{s=k-\tau}^{k-1} \|y^{s+1}-y^s\|^2.
 \end{eqnarray}
 By the bounded quadratic growth condition, it follows that for all $k$,
 \begin{equation}
 \frac{\sigma}{2} d^2(y^k,\cY)\leq D(y^k)-D^*.
  \end{equation}
Then one can verify that the following inequality holds
 \begin{equation}\label{grow}
 \frac{1}{2\alpha |J|}d^2(y^k,\cY) \leq \frac{1}{(1+\alpha\sigma)|J|}\Gamma_\alpha(y^k).
 \end{equation}
On the other hand, it is not hard to see that
 \begin{equation}\label{expec1}
 \EE_{j_k}[\|y^{k+1}-y^k\|^2|\xi_k]=\frac{1}{|J|}\|\widetilde{y}^{k+1}-y^k\|^2,
 \end{equation}
 which implies that
  \begin{equation}\label{expec2}
 \EE \|y^{k+1}-y^k\|^2=\EE_{\xi_k}\EE_{j_k} [\|y^{k+1}-y^k\|^2|\xi_k]=\frac{1}{|J|}\EE_{\xi_k}\|\widetilde{y}^{k+1}-y^k\|^2.
 \end{equation}
From \eqref{grow} and \eqref{expec2}, taking expectation with respect to $\xi_k$ on \eqref{fina2} implies
    \begin{eqnarray}\label{fina3}
  \EE \Gamma_\alpha(y^{k+1}) &\leq&  \left(1-\frac{\alpha\sigma}{|J|(1+\alpha\sigma)}\right)\EE\Gamma_\alpha(y^k) \nonumber\\
  &&-\left(\frac{1}{2\alpha}-\eta_1-\eta_2\right)  \EE \|y^{k+1}-y^k\|^2 + \frac{\eta_2}{|J|} \sum_{s=k-\tau}^{k-1} \EE\|y^{s+1}-y^s\|^2.
 \end{eqnarray}
By the choice of $\alpha$,  it follows that $\alpha \leq \frac{1}{4(\eta_1+\eta_2)}$. Then it follows from \eqref{fina3} that
      \begin{eqnarray}\label{fina4}
  \EE \Gamma_\alpha(y^{k+1}) &\leq&  \left(1-\frac{\alpha\sigma}{|J|(1+\alpha\sigma)}\right)\EE\Gamma_\alpha(y^k)  \nonumber\\
  &&- \frac{1}{4\alpha} \EE \|y^{k+1}-y^k\|^2 + \frac{\eta_2}{|J|} \sum_{s=k-\tau}^k \EE\|y^{s+1}-y^s\|^2.
 \end{eqnarray}
Let $V_k:=\EE \Gamma_\alpha(y^k)$, $w_k:=\EE\|y^{k+1}-y^k\|^2$,  $a:=1-\frac{\alpha\sigma}{|J|(1+\alpha\sigma)}$, $b:=\frac{1}{4\alpha}$,  $c=\frac{\eta_2}{|J|}$. Then \eqref{fina4} becomes
  \begin{eqnarray}\label{fina5}
  V_{k+1} \leq  a V_k -b w_k + c \sum_{s=k-\tau}^{k} w_s.
 \end{eqnarray}
To employ Lemma \ref{Aylem}, it remains to determine the stepsize $\alpha$ such that the following condition holds
$$\frac{c}{1-a}\frac{1-a^{\tau+1}}{a^{\tau}}\leq b.$$
Let $\beta:=1-\frac{1}{|J|}$. Then $a=\frac{1+\beta \alpha \sigma}{1+\alpha \sigma}$. After some simple calculations, the condition above becomes
$$\frac{1}{a^\tau}\leq 1+\frac{\sigma(1-\beta)}{1+\alpha\sigma}(\frac{c}{4}-\alpha).$$
Since $\alpha\leq \frac{c}{8}$ by the choice of $\alpha$, it suffices to require that
\begin{equation}\label{inequ1}
\frac{1}{a^\tau}=\left(\frac{1+\alpha\sigma}{1+\beta\alpha\sigma}\right)^\tau\leq 1+\frac{c\sigma(1-\beta)}{8(1+\alpha\sigma)}.
\end{equation}
Denote $\gamma:=\frac{c\sigma(1-\beta)}{8}$ and $z:=\alpha \sigma$. Then \eqref{inequ1} becomes
\begin{equation}\label{inequ2}
\left(\frac{1+z}{1+\beta z}\right)^\tau \leq 1+\frac{\gamma}{1+z}.
\end{equation}
It is not hard to find that there exists $z_0$ such that when $0<z\leq z_0$, or equivalently $\alpha \leq \frac{z_0}{\sigma}$, \eqref{inequ2} always holds. Actually, by the monotonicity of $\left(\frac{1+z}{1+\beta z}\right)^\tau$ and $1+\frac{\gamma}{1+z}$ with respect to $z$, we can choose $z_0$ as the solution to the equation
\begin{equation}\label{inequ3}
\left(\frac{1+z}{1+\beta z}\right)^\tau = 1+\frac{\gamma}{1+z}.
\end{equation}
Collecting all the bounds on $\alpha$, we can conclude that if the stepsize $\alpha$ satisfies \eqref{ss},
then the linear convergence result \eqref{licon} follows by Lemma \ref{Aylem}.

It remains to show the convergence result \eqref{liconofx}. Using the expression of $x^k$ in \eqref{alg3x} and the Lipschtiz continuity property of $\nabla f^*_i$, and letting $\bar{y}^k:=\cP_{\cY}(y^{k-\tau_k^i})$ for a fixed index $i\in I$,  we derive that
\begin{eqnarray*}
  \|x^k_i-\bar{x}_i\| &=&  \|\nabla f_i^*( -\sum_{j\in J}\cA_{ji}^*y_j^{k-\tau_k^i})- \nabla f_i^*( -\sum_{j\in J}\cA_{ji}^*\bar{y}_j^k)\|  \nonumber\\
  &\leq &\frac{1}{\mu_i}\sum_{j\in J}\|\cA_{ji}\|\|\bar{y}^k_j-y_j^{k-\tau_k^i}\|   \nonumber\\
  &\leq &\frac{\sqrt{\sum_{j\in J}\|\cA_{ji}\|^2}}{\mu_i}\sqrt{\sum_{j\in J} \|\bar{y}_j^k-y_j^{k-\tau_k^i}\|^2}
  \nonumber\\
  &\leq &\frac{\sqrt{\sum_{j\in J}\|\cA_{ji}\|^2}}{\mu_i}  d(y^{k-\tau_k^i},\cY),
 \end{eqnarray*}
where the second inequality follows from the Cauchy inequality. Thus
\begin{equation}\label{ine}
\|x^k-\bar{x}\|^2=\sum_{i\in I}\|x^k_i-\bar{x}_i\| ^2\leq \sum_{i\in I}\sum_{j\in J}\frac{\|\cA_{ji}\|^2}{\mu_i^2}d^2(y^{k-\tau_k^i},\cY).
\end{equation}
Using  \eqref{licon}, we have
\begin{eqnarray}\label{ine2}
  \EE[d^2(y^{k-\tau_k^i},\cY)] &\leq &  2\alpha \EE[\Gamma_\alpha(y^{k-\tau_k^i})] \nonumber\\
  &\leq & 2\alpha \left(1-\frac{\alpha\sigma}{|J|(1+\alpha\sigma)} \right)^{k-\tau} \Gamma_\alpha(y^0),
 \end{eqnarray}
 where we use the fact that $\tau_k^i\leq \tau$ for all $k$ and $i$. Then \eqref{liconofx} follows from \eqref{ine} and \eqref{ine2} immediately. The proof is complete.
\end{proof}

\section{Application examples}
In this section, we present three application examples to illustrate the prospect of our proposed method.

\subsection{Best approximation problem}
As the first application example, we consider the best approximation problem, i.e., finding the best approximation to a given point $v$ from the intersection of some closed convex sets $\Omega_0, \Omega_i, i\in I$. Mathematically, we solve the following minimization problem
\begin{eqnarray}\label{app1}
\begin{array}{ll}
\Min & \frac{1}{2}\|x-v\|^2\\[4pt]
\st & x\in \bigcap_{i=1}^m \Omega_i, x\in \Omega_0.
\end{array}
\end{eqnarray}
Among the iterative algorithms for solving this problem, the (random) Dykstra method is one of the first projection-based algorithms, whose linear convergence was recently established under very mild assumptions; see e.g. \cite{Necoara2019Linear}. If each iteration point $x^k$ is required to lie in $\Omega_0$, the Dykstra-type methods will not be applicable. Note that the requirement about that $x^k$ lies in $\Omega_0$ is a very natural constraint in many practical cases; e.g. in imaging processing the pixel of denosing/deblurring imagines has to belong to some certain interval. Interestingly, our proposed algorithm could meet this requirement. To this end, we denote
\begin{equation}\label{fx}
f(x):= \frac{1}{2}\|x-v\|^2+\delta_{\Omega_0}(x)
\end{equation}
and $g_j(x):=\delta_{\Omega_j}(x)$ for all $j=1,\cdots,m$. Then problem \eqref{app1} can be reformulated as
$$\Min f(x)+\sum_{j=1}^m g_j(x),$$
so that the proposed RDCIAG method can be applied.
Using the expressions of $f(x)$ and $g_j(x)$, it is not hard to verify that
$$\nabla f^*(y)=\cP_{\Omega_0}(v+y)$$ and
$$\prox_{\alpha g^*_j}(y)=y-\alpha \cP_{\Omega_j}(\alpha^{-1}y).$$
Thus the iterative scheme of applying the proposed RDCIAG method to solve problem \eqref{app1} is
\begin{eqnarray}\label{alg3}
\left\{\begin{array}{lll}
x^k &= &\cP_{\Omega_0}(v-\sum_{j\in J}y_j^k),   \\[5pt]
y_j^{k+1}&=& y_j^k+\alpha_k x^k-\alpha_k \cP_{\Omega_j}(\alpha_k^{-1}y_j^k+x^k),\quad j\in J.
\end{array} \right.
\end{eqnarray}
Theoretically, if the constraints $\Omega_i$ are polyhedral, then both $f$ and $g_j$ are piecewise linear-quadratic and their subdifferentials are polyhedral. Thus the bounded quadratic growth condition holds by Proposition \ref{prop-1} and the iterative scheme \eqref{alg3} converges linearly by Theorem \ref{mainf2}.
However, since $f$ is nonsmooth, the theoretical results proposed in  \cite{Necoara2019Linear} cannot be applied to analyze the algorithm above.

\subsection{Sparse optimization problem}
In this subsection, we point out that our proposed algorithm is suitable for solving the augmented $\ell_1$ minimization problem
  \begin{eqnarray}\label{app2}
\begin{array}{ll}
\Min & \lambda \|x\|_1+\frac{1}{2}\|x\|^2\\[4pt]
\st & Ax=b,
\end{array}
\end{eqnarray}
where $A\in \RR^{m\times n}$ is a given matrix with rows $a_i$, $b\in \RR^m$ is a given vector with entries $b_i$, and $\lambda>0$ is a regularization parameter. In compressive sensing, the matrix $A$ represents the compressed linear measure and hence the number $m$ of measures is much less than the dimension $n$ of the signal $x$. The well-known algorithm for solving this problem is the linearized Bregman method \cite{Lai2012Augmented}, which is actually the dual gradient descent applied to the Lagrangian dual problem to \eqref{app2}. Recently, the authors of \cite{Lorenz2014,Frank2019linear} proposed (randomized) sparse Kaczmarz algorithms by viewing the linearized Bregman method as a Bregman projection method. If we let $f(x):= \lambda \|x\|_1+\frac{1}{2}\|x\|^2$, then the randomized sparse Kaczmarz algorithm reads as
  \begin{eqnarray}\label{SKA}
\left\{\begin{array}{lll}
x^{k+1}_*&=&x^k_*-\frac{\langle a_i, x^k\rangle-b_i}{\|a_i\|^2}\cdot a_i,    \\
x^{k+1}&=& \nabla f^*(x^{k+1}_*),
\end{array} \right.
\end{eqnarray}
where the index $i\in \{1,\cdots, m\}$ is chosen randomly. It was observed (see \cite{Frank2019linear} and its reference) that the randomized sparse Kaczmarz algorithm could be identified as a random dual coordinate descent method applied to the dual objective function
\begin{equation}\label{df1}
\frac{1}{2}\|\nabla f^*(A^Ty)\|^2-\langle b, y\rangle.
\end{equation}
In these dual-type methods only the case of large $m$ is exploited via choosing the index $i\in \{1,\cdots, m\}$ randomly. However, as pointed out previously, $n$ is much larger than $m$ in compressive sensing. Therefore, our proposed algorithm could be applicable to the case of large $m$ and $n$. We now describe this case more clearly. First, we view the constraint $Ax=b$ as an intersection of the hyperplanes $\Omega_j:=\{x: \langle a_j, x\rangle =b_j\}, j = 1,\cdots, m$ and write down $f(x)=\sum_{i=1}^n f_i(x_i)$ with $f_i(x_i)=\lambda|x_i|+\frac{1}{2}|x_i|^2$. From this point of view, problem \eqref{app2} can be written as
\begin{equation}\label{BP2}
\Min \sum_{i=1}^n f_i(x_i)+ \sum_{i=1}^m \delta_{\Omega_i}(x).
\end{equation}
Its Fenchel-Rockafellar dual problem reads as
\begin{equation*}
\Min_{y_j\in \RR^n,j=1,\cdots,m}  \sum_{i=1}^n f_i^*(- \sum_{j=1}^m y_{ji}) +  \sum_{j=1}^m\delta_{\Omega_j}^*(y_j),
\end{equation*}
whose objective function is obviously different from \eqref{df1}.
Since the objective function is piecewise linear-quadratic and the constraint are polyhedral convex, it then follows that $\partial f_i$ and $\partial \delta_{\Omega_j}$ are polyhedral. Thus, the bounded quadratic growth conditions holds by Proposition \ref{prop-1}. Applying the proposed algorithm to solve problem \eqref{BP2} can get an optimal solution linearly by Theorem \ref{mainf2}.

\subsection{Network Utility Maximization}
As the last example, we revisit the network utility maximization problem that was discussed in Beck's book \cite{beck2017first}. To recover the existing algorithm, we follow the description of this problem in \cite{beck2017first}.  Consider a network that consists of a set $\cS=\{1,2,\cdots, S\}$ of sources and a set $\cL=\{1,2,\cdots, L\}$ of links, where a link $\ell$ has a capacity $c_\ell$. For each source $s\in \cS$, the set of all links used by source $s$ is denoted by $\cL(s)\subseteq \cL$. For a given link $\ell\in\cL$, the set of all sources that use link $\ell$ is denoted by $\cS(\ell)\subseteq \cS$. Then, $\cS(\ell)$ and $\cL(s)$ have the relation that $s\in \cS(\ell)$ if and only if $\ell\in\cL(s)$. Each source $s\in \cS$ is associated with a concave utility function $u_s$, meaning that if source $s$ sends data at a rate $x_s$, it gains a utility $u_s(x_s)$. Assume that the rate $x_s$ lies in the interval $I_s=[0, M_s]$ with $M_s>0$ being given. The network utility maximization problem is to allocate optimally the source rates, which is mathematically formulated as
\begin{eqnarray}\label{app3}
\begin{array}{lll}
\Max & \sum_{s\in\cS}u_s(x_s) &\\[4pt]
\st & \sum_{s\in \cS(\ell)}x_s\leq c_\ell, & \ell \in \cL,\\ [4pt]
& x_s\in I_s, & s\in \cS.
\end{array}
\end{eqnarray}
In order to reformulate it as a special case of our framework, we let $I=\cS, J=\cL$, $f_s(x_s)=-u_s(x_s)+\delta_{I_s}(x_s)+\frac{\lambda}{2}\|x_s\|^2,  g_\ell(\cdot)=\delta_{(-\infty, c_\ell]}(\cdot)$ with $\lambda\geq0$ being a regularization parameter. Define
 \begin{eqnarray*}
A_{\ell s}:=\left\{\begin{array}{ll}
1, & s\in \cS(\ell),\\
0, & \textrm{otherwise},
\end{array} \right.
\end{eqnarray*}
and $\cA_\ell x:= \sum_{s\in \cS}A_{\ell s}x_s=\sum_{s\in \cS(\ell)}x_s$. Then the problem above can be written as
 \begin{equation}\label{app3.1}
\Min \sum_{i\in I} f_i(x_i)+\sum_{j\in J} g_j(\cA_jx).
\end{equation}
Applying the iterate \eqref{alg2} to solve problem \eqref{app3.1} yields
 \begin{eqnarray}\label{app3.2}
\left\{\begin{array}{lll}
x_i^k &= & \nabla f_i^*( -\sum_{j\in J}\cA_{ji}^*y_j^{k-\tau_k^i}),\quad i\in I,   \\[5pt]
y_j^{k+1}&=& \prox_{\alpha_kg^*_j}(y_j^k+\alpha_k\sum_{i\in I}\cA_{ji}x^k_i),\quad j_k\in J,
\end{array} \right.
\end{eqnarray}
where $j_k\in J$ is uniformly chosen.
Note that
$$\nabla f_i^*(y)=\arg\min_x\{\langle x, y\rangle-f_i(x)\},$$
and
$$\prox_{\alpha_kg^*_j}(y)=[y-\alpha_k c_j]_+.$$
Since $\sum_{j\in J}\cA_{ji}^*y_j^{k-\tau_k^i}=\sum_{j\in \cL(i)}y_j^{k-\tau_k^i}$, using the notations involved in the original problem, we can rewrite  the algorithm \eqref{app3.2} as
 \begin{eqnarray}\label{app3.3}
\left\{\begin{array}{lll}
x_s^k &= & \arg\min_{x_s\in I_s}\left\{\frac{\lambda}{2}\|x_s\|^2-u_s(x_s)+ (\sum_{\ell\in \cL (s)}y_\ell^{k-\tau_k^s}) x_s\right\},\quad s\in \cS, \\[4pt]
y_\ell^{k+1}&=& [y_\ell^k+\alpha_k \sum_{s\in \cS(\ell)} x^k_s-\alpha_k c_j]_{+},\quad \ell \in \cL,
\end{array} \right.
\end{eqnarray}
where $\ell \in \cL$ is uniformly chosen at random. If $\lambda=0$ and $\tau_k^s\equiv 0$, and updating $y_\ell^k$ for all $\ell\in \cL$, then the iterate \eqref{app3.3} reduces to the dual projected subgradient method presented in \cite{beck2017first}.

\section{Conclusions}
In this paper, we have proposed a hybrid algorithm by blending the well-known random dual block coordinate descent method and the recently popularized PIAG method to deal with a class of large-scale problems. Based on a newly established generalized descent lemma, the linear convergence of the proposed algorithm is derived under the bounded error bound condition. Finally, some application examples have been illustrated by modifying or extending several existing algorithms. The proposed algorithm may be accelerated with the help of the restart or inertial accelerated techniques as employed in \cite{Necoara2019Linear}. Moreover, we would like to generalize the proposed algorithm in non-Euclidean spaces and enhance the linear convergence by using H$\ddot{o}$lderian error bound conditions. We leave them for future research.

\section*{Acknowledgements}
This work is supported by the National Science Foundation of China (Nos.11971480, 11771287, 71632007, 11631013, and 11991020), the Beijing Academy of Artificial Intelligence, and the Natural Science Fund of Hunan for Excellent Youth (No.2020JJ3038).

\section{Appendix: The missing proofs}

\noindent {\bf The proof of Proposition \ref{pro-bms}}:
Let $y\in \partial \psi^{-1}(x)$, which is equivalent to $x\in \partial \psi(y)$. By \eqref{prop1}, we have
\[
d (y, \partial \psi^{-1}(\bar{x})) \le \kappa \|x-\bar{x}\|,\quad \forall x\in \BB(\bar{x},\delta) \cap \partial \psi(y).
\]
This means that
\[
d (y, \partial \psi^{-1}(\bar{x})) \le \kappa d (\bar{x},\BB(\bar{x},\delta) \cap \partial \psi(y)).
\]
For any $y$ such that $d(\bar{x},\partial \psi(y))\le \delta$, the above inequality implies
\[
d (y, \partial \psi^{-1}(\bar{x})) \le \kappa d (\bar{x}, \partial \psi(y)).
\]
Then by \cite[Proposition 1]{Ye2018}, for any $r>0$ there exists $k_r>0$ such that
\[
d (y, \partial \psi^{-1}(\bar{x})) \le \kappa_r d (\bar{x}, \partial \psi(y)),\quad \forall y\in \BB_r.
\]
The proof is complete.

\bigskip

\noindent {\bf The proof of Proposition \ref{prop-1}}:
(i) Applying \cite[Theorem 4.3]{Drusvyatskiy2018Error} to the dual problem \eqref{d3} requires three conditions. The first is that $f^*$ is essentially smooth and the primal problem has a unique minimizer $\bar{x}$, which can be satisfied if $f$ is essentially strictly convex. The second is the BLR property for the pair of sets $\partial g(\cA\bar{x})$ and $(\cA^*)^{-1}(-\partial f(\bar{y}))$ to upper estimate $d(y,\cY)$, which can be satisfied by condition \eqref{cond2}.

The third is the firm convexity of the sum functions $f^*$ and $g^*$. Indeed, since $\partial f_i$ is upper Lipschitz continuous at $\bar{x}_i$, by Proposition \ref{pro-bms} $\partial f^*_i$ is BMS at $\bar{x}_i$ and then $f_i^*$ is firmly convex with respect to $\bar{x}_i$. In the same way, it follows that $g_j^*$ is firmly convex with respect to $\cA_j \bar{x}$. Then by \cite[Lemma 5]{Drusvyatskiy2018Error}, the sum functions $f^*$ and $g^*$ are firmly convex with respect to $\bar{x}$ and $\cA \bar{x}$ respectively.

Choosing a compact set $\BB_r$ to replace the sublevel set $\{y: D(y)\le D^*+v\}$ used in the proof of \cite[Theorem 4.3]{Drusvyatskiy2018Error}, the desired result follows immediately.

(ii) Noting that for a closed proper convex function $\psi$, $\partial \psi$ is polyhedral if and only if $\partial \psi^*$ is polyhedral, the desired result follows immediately from \cite[Corollary 4.4]{Drusvyatskiy2018Error}.

\bigskip

\noindent {\bf The proof of Proposition \ref{prop}}:
Recall that $h_i(y)=f_i^*(-\sum_{j\in J}\cA_{ji}^*y_j)$. We have
$$\nabla h_i(y)=(\nabla_j h_i(y))_{j\in J}=\left[(-\cA_{ji})\nabla f^*_i(-\sum_{j\in J}\cA_{ji}^*y_j)\right]_{j\in J}.$$
Since $f_i$ is strongly convex with modulus $\mu_i$, $\nabla f_i^*$ must be $\frac{1}{\mu_i}$-Lipschitz continuous. Thereby, we derive that
  \begin{eqnarray}\label{Lip1}
\begin{array}{ll}
\|\nabla h_i(y)-\nabla h_i( y^\prime)\|^2 & =  \sum_{j\in J} \| \nabla_j h_i(y)- \nabla_j h_i(y^\prime)\|^2 \\
 &=  \sum_{j\in J}  \| \cA_{ji} \nabla f^*_i(-\sum_{j\in J}\cA_{ji}^*y^\prime_j) - \cA_{ji} \nabla f^*_i(-\sum_{j\in J}\cA_{ji}^*y_j)\|^2\\
  &\leq  \sum_{j\in J}\|\cA_{ji}\|^2  \|  \nabla f^*_i(-\sum_{j\in J}\cA_{ji}^*y^\prime_j) -\nabla f^*_i(-\sum_{j\in J}\cA_{ji}^*y_j)\|^2\\
    &\leq  (\sum_{j\in J}\frac{\|\cA_{ji}\|^2}{\mu_i^2})  \| \sum_{j\in J}\cA_{ji}^*y_j-\sum_{j\in J}\cA_{ji}^*y^\prime_j \|^2\\
    &\leq  (\sum_{j\in J}\frac{\|\cA_{ji}\|^2}{\mu_i^2}) |J| \sum_{j\in J}\|\cA_{ji}^*\|^2 \|y_j-y^\prime_j \|^2\\
       &\leq  (\sum_{j\in J}\frac{\|\cA_{ji}\|^2}{\mu_i^2}) |J|\max_{j\in J}\{\|\cA_{ji}^*\|^2 \} \sum_{j\in J}\|y_j-y^\prime_j \|^2\\
        &=  (\sum_{j\in J}\frac{\|\cA_{ji}\|^2}{\mu_i^2}) |J|\max_{j\in J}\{\|\cA_{ji}^*\|^2 \}  \|y -y^\prime \|^2,
\end{array}
\end{eqnarray}
where the third inequality follows from the Jensen inequality.

\bigskip

\noindent {\bf The proof of Proposition \ref{flem1}:}  We divide the proof into four steps.

\textbf{Step 1.} We first show that
\begin{equation}\label{bound1}
\EE_{j_k}\Big[\sum_{j\in J}g^*_j(y_j^{k+1})|\xi_k\Big]= \frac{|J|-1}{|J|}\sum_{j\in J}g^*_j(y_j^k)+\frac{1}{|J|}\sum_{j\in J}g^*_j(\widetilde{y}_j^{k+1}).
\end{equation}
Actually, denoting $G(y):= \sum_{j\in J}g^*_j(y_j)$ and taking the conditional expectation over $j_k$ conditioned on $\xi_k$, we can derive that
\begin{eqnarray}\label{EEbound1}
\EE_{j_k}\Big[\sum_{j\in J}g^*_j(y_j^{k+1})|\xi_k\Big]& = & \EE_{j_k}[G(y^{k+1})|\xi_k] \nonumber \\
&=& \EE_{j_k}\Big[G(U_{j_k}\widetilde{y}_{j_k}^{k+1} + \sum_{q\neq j_k} U_qy_q^k)|\xi_k\Big] \nonumber \\
&=& \frac{1}{|J|}\sum_{j\in J} G(U_j\widetilde{y}_j^{k+1} + \sum_{q\neq j} U_qy_q^k)  \nonumber \\
&=& \frac{1}{|J|}\sum_{j\in J} \left(g^*_j(\widetilde{y}_j^{k+1})+ \sum_{q\neq j}g^*_q(y_q^k)\right) \nonumber \\
&=& \frac{|J|-1}{|J|}\sum_{j\in J}g^*_j(y_j^k)+\frac{1}{|J|}\sum_{j\in J}g^*_j(\widetilde{y}_j^{k+1}).
\end{eqnarray}

\textbf{Step 2.} We show that
\begin{equation}\label{bound2}
\EE_{j_k}\Big[\sum_{i\in I}h_i(y^{k+1})|\xi_k\Big]\leq  \sum_{i\in I}\left( h_i(y^k)+\frac{1}{|J|}\langle \nabla h_i(y^k), \widetilde{y}^{k+1}-y^k
\rangle +\frac{\ell_i}{2|J|}\|\widetilde{y}^{k+1}-y^k\|^2\right).
\end{equation}
By Proposition \ref{prop}, we derive that
\begin{eqnarray}
h_i(y^{k+1})& = & h_i(U_{j_k}\widetilde{y}_{j_k}^{k+1} + \sum_{q\neq j_k} U_qy_q^k) \nonumber \\
&\leq & h_i(y^k)+\langle \nabla h_i(y^k), U_{j_k}(\widetilde{y}_{j_k}^{k+1}-y^k_{j_k} )\rangle +\frac{\ell_i}{2}\|U_{j_k}(\widetilde{y}_{j_k}^{k+1}-y^k_{j_k} )\|^2. \nonumber
\end{eqnarray}
Taking the conditional expectation over $j_k$ conditioned on $\xi_k$, we obtain
\begin{eqnarray*}
 \EE_{j_k}[h_i(y^{k+1})|\xi_k]&\leq&  h_i(y^k)+ \frac{1}{|J|}\sum_{j\in J}\langle \nabla h_i(y^k), U_{j}(\widetilde{y}_j^{k+1}-y^k_j )\rangle +\frac{1}{|J|}\sum_{j\in J} \frac{\ell_i}{2}\|U_j(\widetilde{y}_j^{k+1}-y^k_j )\|^2 \nonumber \\
 &\leq&  h_i(y^k)+ \frac{1}{|J|} \langle \nabla h_i(y^k), \widetilde{y}^{k+1}-y^k\rangle +\frac{1}{|J|} \frac{\ell_i}{2}\| \widetilde{y}^{k+1}-y^k\|^2,
\end{eqnarray*}
from which \eqref{bound2} follows.

\textbf{Step 3.} We show that
\begin{equation}\label{bound2.1}
\EE_{j_k}[\sum_{i\in I}h_i(y^{k+1})|\xi_k]\leq  \sum_{i\in I}\left( h_i(\widetilde{y}^{k+1})+ \frac{|J|-1}{|J|} \langle \nabla h_i(\widetilde{y}^{k+1})), y^k-\widetilde{y}^{k+1}\rangle +\frac{|J|-1}{|J|} \frac{\ell_i}{2}\|y^k-\widetilde{y}^{k+1}\|^2\right).
\end{equation}
By Proposition \ref{prop}, we derive that
\begin{eqnarray}
h_i(y^{k+1})& = & h_i(U_{j_k}\widetilde{y}_{j_k}^{k+1} + \sum_{q\neq j_k} U_qy_q^k) \nonumber \\
&\leq & h_i(\widetilde{y}^{k+1} )+\langle \nabla h_i(\widetilde{y}^{k+1}), \sum_{q\neq j_k}U_q(y^k_q-\widetilde{y}_q^{k+1} )\rangle +\frac{\ell_i}{2}\|\sum_{q\neq j_k}U_q(y^k_q-\widetilde{y}_q^{k+1} )\|^2. \nonumber
\end{eqnarray}
Taking the conditional expectation over $j_k$ conditioned on $\xi_k$, we obtain
\begin{eqnarray}
 \EE_{j_k}[h_i(y^{k+1})|\xi_k]&\leq&  h_i(\widetilde{y}^{k+1}))+ \frac{1}{|J|}\sum_{j\in J}\langle \nabla h_i(\widetilde{y}^{k+1}), \sum_{q\neq j}U_q(y^k_q-\widetilde{y}_q^{k+1} )\rangle  +\frac{1}{|J|}\sum_{j\in J} \frac{\ell_i}{2}\|\sum_{q\neq j}U_q(y^k_q-\widetilde{y}_q^{k+1} )\|^2 \nonumber \\
 &\leq&  h_i(\widetilde{y}^{k+1}))+ \frac{|J|-1}{|J|} \langle \nabla h_i(\widetilde{y}^{k+1})), y^k-\widetilde{y}^{k+1}\rangle +\frac{|J|-1}{|J|} \frac{\ell_i}{2}\|y^k-\widetilde{y}^{k+1}\|^2,
\end{eqnarray}
from which \eqref{bound2.1} follows.

\textbf{Step 4.} From $\eqref{bound2}\times \frac{|J|-1}{|J|}+\eqref{bound2.1}\times \frac{1}{|J|}$ and using the monotonicity of $\nabla h_i$, we derive that
\begin{eqnarray}\label{bound2.2}
\EE_{j_k}[\sum_{i\in I}h_i(y^{k+1})|\xi_k] &\leq & \frac{|J|-1}{|J|}\sum_{i\in I}  h_i(y^k)+ \frac{1}{|J|}\sum_{i\in I}  h_i(\widetilde{y}^{k+1}) + \sum_{i\in I}  \frac{(|J|-1)\ell_i}{|J|^2}\|y^k-\widetilde{y}^{k+1}\|^2 \nonumber \\
 &&  +\frac{|J|-1}{|J|^2}\sum_{i\in I} \langle  \nabla h_i(y^k)-\nabla h_i(\widetilde{y}^{k+1})), \widetilde{y}^{k+1}-y^k\rangle
 \nonumber \\
 &\leq & \frac{|J|-1}{|J|}\sum_{i\in I}  h_i(y^k)+ \frac{1}{|J|}\sum_{i\in I}  h_i(\widetilde{y}^{k+1}) + \sum_{i\in I}  \frac{(|J|-1)\ell_i}{|J|^2}\|y^k-\widetilde{y}^{k+1}\|^2.
\end{eqnarray}
The proof is complete by summing \eqref{EEbound1} and \eqref{bound2.2}.

\bigskip

\noindent {\bf The proof of Lemma \ref{flem2}:}
Since $h_i(x)$ is convex and gradient-Lipschitz-continuous by Proposition \ref{prop}, it follows that
\begin{align}\label{Lp}
h_i(\widetilde{y}^{k+1})\leq &h_i(y^{k-\tau_k^i})+\langle \nabla h_i(y^{k-\tau_k^i}), \widetilde{y}^{k+1}-y^{k-\tau_k^i}\rangle + \frac{\ell_i}{2}  \|\widetilde{y}^{k+1}-y^{k-\tau_k^i}\|^2 \nonumber\\
\leq  &h_i(y)+\langle \nabla h_i(y^{k-\tau_k^i}), \widetilde{y}^{k+1}-y\rangle + \frac{\ell_i}{2}  \|\widetilde{y}^{k+1}-y^{k-\tau_k^i}\|^2.
\end{align}
Note that $\tau_k^i\leq \tau$ and  $\ell_{\max}=\max_{i\in I}\{\ell_i\}$. Using the following inequality
$$
\|v_{k}-v_j\|^2=\|\sum_{i=j}^{k-1}(v_{i+1}-v_i)\|^2\leq (k-j)\sum_{i=j}^{k-1}\|v_{i+1}-v_i\|^2,\quad \forall  k>j\ge1.
$$
and summing \eqref{Lp} over all $i\in I$, we obtain
\begin{eqnarray}\label{obj1}
\sum_{i\in I}h_i(\widetilde{y}^{k+1})& \leq & \sum_{i\in I}h_i(y)+\langle \sum_{i\in I} \nabla h_i(y^{k-\tau_k^i}), \widetilde{y}^{k+1}-y\rangle \nonumber \\
&& +\frac{\ell_{\max}|I|(\tau+1)}{2}\left(\|\widetilde{y}^{k+1}-y^k\|^2+\sum_{s=k-\tau}^{k-1} \|y^{s+1}-y^s\|^2\right).
\end{eqnarray}
By the optimality of $\widetilde{y}^{k+1}_j$ in \eqref{update1} and Fermat's rule, we have
\begin{equation}\label{m01}
0\in \sum_{i\in I} \nabla_{j} h_i(y^{k-\tau_k^i})+\alpha^{-1} (\widetilde{y}^{k+1}_j-y^k_{j})+\partial g^*_{j}(\widetilde{y}^{k+1}_j).
\end{equation}
This together with the subgradient inequality for the convex function $g^*_j(y_j)$ at $\widetilde{y}^{k+1}_j$ implies that
  \begin{eqnarray}\label{obj2}
g^*_j(\widetilde{y}^{k+1}_j)&\leq & g^*_j(y_j) + \langle \sum_{i\in I} \nabla_j h_i(y^{k-\tau_k^i})+\alpha^{-1}(\widetilde{y}^{k+1}_j-y^k_j), y_j-\widetilde{y}^{k+1}_j\rangle
\end{eqnarray}
 Thus, we obtain
 \begin{equation}\label{obj2.1}
 \sum_{j\in J}g^*_{j}(\widetilde{y}_{j}^{k+1})  \leq \sum_{j\in J}g^*_{j}(y_j) + \langle \sum_{i\in I} \nabla h_i(y^{k-\tau_k^i}),y -\widetilde{y}^{k+1}  \rangle + \alpha^{-1} \langle \widetilde{y}^{k+1} -y^k , y -\widetilde{y}^{k+1} \rangle.
 \end{equation}
 Noting $\eta_2=\frac{\ell_{\max}|I|(\tau+1)}{2}$ and summing up \eqref{obj1} and \eqref{obj2.1}, we obtain
  \begin{equation}\label{obj}
 D(\widetilde{y}^{k+1}) \leq D(y) + \frac{1}{\alpha} \langle \widetilde{y}^{k+1} -y^k , y -\widetilde{y}^{k+1} \rangle + \eta_2 (\|\widetilde{y}^{k+1}-y^k\|^2+\sum_{s=k-\tau}^{k-1} \|y^{s+1}-y^s\|^2).
 \end{equation}
 Note that
 $$\langle \widetilde{y}^{k+1} -y^k , y -\widetilde{y}^{k+1} \rangle=\frac{1}{2}\|y-y^k\|^2-\frac{1}{2}\|y-\widetilde{y}^{k+1}\|^2-\frac{1}{2}\|\widetilde{y}^{k+1}-y^k\|^2.$$
Thereby, we have
  \begin{eqnarray}\label{obj3}
 D(\widetilde{y}^{k+1}) & \leq  &D(y) + \frac{1}{2\alpha} \|y-y^k\|^2 -\frac{1}{2\alpha} \|y-\widetilde{y}^{k+1}\|^2-\frac{1}{2\alpha} \|\widetilde{y}^{k+1}-y^k\|^2 \nonumber \\
  &&  +\eta_2 \left(\|\widetilde{y}^{k+1}-y^k\|^2+\sum_{s=k-\tau}^{k-1} \|y^{s+1}-y^s\|^2\right),\quad \forall y\in \cE_2.
 \end{eqnarray}
The descent lemma follows by combining \eqref{fina1} and \eqref{obj3}. This completes the proof.

\bibliographystyle{abbrv}
\small

\end{document}